\documentclass[letterpaper,11pt]{article}
\usepackage[letterpaper,left=1in,right=1in,top=1.5in,bottom=1.5in]{geometry}
\usepackage{bbm}
\usepackage{relsize}

\usepackage{amsmath,amssymb,amsthm, float}

 %relative size: integer value
 %line separation: a LaTeX lengt
\newcommand{\myauthor}{}

\newcommand{\mytitle}{Knots, primes and class field theory}

\title{\mytitle}

\author{Alain Connes and Caterina Consani\thanks{Partially supported by the Simons Foundation collaboration grant n.691493}}
\date{}

\iffalse
\usepackage[pdfstartview=FitH,
            pdfauthor={\myauthor},
            %pdftitle={\mytitle},
            colorlinks=true,
            linkcolor=reference,
            citecolor=citation,
            urlcolor=e-mail,
            backref]{hyperref}

\usepackage[%nameinlink %, capitalise
]{cleveref}
\fi

\usepackage[T1]{fontenc}
\usepackage[english]{babel}

\usepackage{amssymb,url,xspace,subfig}

\usepackage{tikz-cd}

\tikzset{%
  symbol/.style={
    draw=none,
    every to/.append style={
      edge node={node [sloped, allow upside down, auto=false]{$#1$}}
    },
  },
}

\usepackage{soul}  %strike with \st through words

\usepackage[export]{adjustbox}

\usepackage[hidelinks]{hyperref}
\usepackage[%nameinlink %, capitalise
]
{cleveref}

\usepackage{amscd}
\usepackage{amsbsy}
\usepackage{amssymb}
\usepackage{verbatim}
\usepackage{eucal}
\usepackage{microtype}
\usepackage{mathrsfs}
\usepackage{amsthm}
\usepackage{stmaryrd}
\pagestyle{headings}
\usepackage[all,cmtip]{xy}
\usepackage{tikz}
\usetikzlibrary{matrix,arrows}
\usepackage[abbrev,lite,alphabetic]{amsrefs}
\usepackage{dsfont}

\setlength{\parindent}{0pt}

\def\ie{{\it i.e.\/}\ }
\def\qqq{\,,\,~\forall}

\def\Spec{{\rm Spec\,}}
\def\Fr{{\rm Frob}}
\newcommand{\GL}{{\rm GL}}

% Caligrafic letters

%Boldface letters
\def\A{{\mathbb A}}
\def\C{{\mathbb C}}
\def\F{{\mathbb F}}
\def\N{{\mathbb N}}
\def\Q{{\mathbb Q}}
\def\R{\mathbb{R}}
\def\Z{\mathbb{Z}}

\def\cS{\mathcal{S}}
\def\sheaf{{\mathscr{O} }}

%%%%%%%%%%%%%%%%%%%%%%%%%%%%%%%%

%    Absolute value notation

%    Blank box placeholder for figures (to avoid requiring any
%    particular graphics capabilities for printing this document).

% Theorems
\theoremstyle{plain}
\newtheorem{thm}{Theorem}[section]
\newtheorem{defn}[thm]{Definition}
\newtheorem{prop}[thm]{Proposition}
\newtheorem{fact}[thm]{Fact}
\newtheorem{cor}[thm]{Corollary}
\newtheorem{lem}[thm]{Lemma}
\newtheorem{rem}[thm]{Remark}
\newtheorem*{prop*}{Proposition}

\newtheorem*{theorem1}{Theorem 1}
\newtheorem*{theorem2}{Theorem 2}
\newtheorem*{theorem3}{Theorem 3}

\begin{document}
\hypersetup{linkcolor=black}

\maketitle

\begin{abstract}
    \noindent In this paper, we present a geometric generalization of class field theory, demonstrating how adelic constructions, central to the spectral realization of zeros of L-functions and the geometric framework for explicit formulas in number theory, naturally extend the classical theory. This generalization transitions from the idele class group, which acts as the adelic analog of Galois groups, to a geometric framework associated with schemes and the ring of integers of global fields. This perspective provides a conceptual explanation for the role of the adele class space in the spectral realization of L-function zeros and identifies the idele class group as a generic point in this context. The sector $X_{\mathbb{Q}}$ of the adele class space corresponding to the Riemann zeta function gives the class field counterpart of the scaling topos. The main result is the construction of a functor mapping finite abelian extensions of $\mathbb{Q}$ to finite covers of  $X_{\mathbb{Q}}$, with the monodromy of periodic orbits of length $\log p$ under the scaling action corresponding to the Galois action of the Frobenius at the prime p.

\paragraph{Key Words.} Knots and Primes, Etale fundamental group, Scheme, Class field theory, Adele class space, Scaling topos, Baum-Connes map, Noncommutative Geometry.

\paragraph{Mathematics Subject Classification 2020.}
\href{http://www.ams.org/mathscinet/msc/msc2020.html?t=11R37&btn=Current}{11R37},
\href{http://www.ams.org/mathscinet/msc/msc2020.html?t=11M06&btn=Current}{11M06};
\href{http://www.ams.org/mathscinet/msc/msc2020.html?t=11Mxx&btn=Current}{11M55},
\href{http://www.ams.org/mathscinet/msc/msc2020.html?t=14A15&btn=Current}{14A15},
\href{http://www.ams.org/mathscinet/msc/msc2020.html?t=14F20&btn=Current}{14F20},
\href{http://www.ams.org/mathscinet/msc/msc2020.html?t=57K10&btn=Current}{57K10}.

\end{abstract}

\tableofcontents

\hypersetup{linkcolor=citation}

\section{Introduction}

%%\href{run:./polplusminus.nb}{The present paper} 
This paper proposes an extension of class field theory, moving beyond the traditional idele class group description of the abelianized Galois group to a geometric framework of adelic nature. This new perspective establishes a counterpart to Grothendieck’s extension of Galois theory to schemes, focusing on those associated with the ring of integers. This shift not only enriches the conceptual foundation of class field theory but also elucidates the role of the adele class space in  the spectral realization of $L$-function zeros and the explicit formulas of number theory.  

The geometrization of Galois theory, as pioneered by A. Grothendieck in the 1960s, revealed that the \'etale site of an algebraic scheme, combined with its profinite fundamental group, provides an algebro-geometric analogue to classical Galois theory. In contrast, class field theory centers on the idele class group of a global field and its relation, via Artin’s reciprocity map, to the Galois group of the maximal abelian extension. This number-theoretic perspective already suggests the pivotal role of adelic spaces in capturing the geometric essence of abelian field extensions.\vspace{.03in}

According to C. Chevalley\footnote{C. Chevalley "La Th\'eorie du Corps de Classes" Annals of Math. Vol 41, No. 2 (1940)} 
\begin{quote}L'objet de la th\' eorie du corps de classes est de montrer comment les extensions ab\' eliennes d'un corps de nombres alg\' ebriques $K$ peuvent \^etre d\' etermin\' ees par des \' el\' ements tir\' es de la connaissance de $K$ lui-m\^eme; ou, si l'on veut pr\' esenter les choses en termes dialectiques, comment un corps poss\`ede en soi les \' el\' ements de son propre d\' epassement\footnote{The object of class field theory is to show how the abelian extensions of an algebraic number field  $K$  can be determined by elements derived from the knowledge of  $K$  itself; or, if one wishes to present it in dialectical terms, how a field possesses within itself the elements of its own surpassing.}.
\end{quote}

It is therefore natural to anticipate that Grothendieck's extension of Galois theory to schemes will, in the context of global fields $K$, acquire a geometric counterpart of  adelic nature—specifically, one involving the ring $\mathbb{A}_K$ of adeles of $K$. From the very inception of Galois theory, Galois's concept of a \emph{primitive} equation inherently involves not only an abstract group but also, in a fundamental way, the manner in which this group acts on the set of roots of the equation.\vspace{.02in}

The idele class group of a global field $K$ is defined as the quotient group $C_K := K^\times \backslash {\rm GL}_1(\mathbb{A}_K)$. Since ${\rm GL}_1(\mathbb{A}_K)$ acts on $\mathbb{A}_K$ by multiplication, the most natural space on which $C_K$ acts is the adele class space:  
\[
Y_K := K^\times \backslash \mathbb{A}_K, \quad g(y) := gy, \quad \forall g \in C_K, \, y \in Y_K.
\]  
The key notions and properties of this multiplication action of $C_K$ on $Y_K$ (see \cite{C}) are as follows:\vspace{.05in}  

\textbf{Isotropy Subgroup}: The isotropy subgroup associated with the adeles that vanish exactly at a place $ v $ of $ K $ is $ K_v^\times \subset K^\times \backslash {\rm GL}_1(\mathbb{A}_K) $. This subgroup establishes the canonical connection between local and global class field theory.  \vspace{.03in}

\textbf{Explicit Formulas}: The action of the isotropy subgroup $ K_v^\times $ on the transverse space $ K_v $ of the periodic orbit determines the contribution of the place $ v $ to the Weil explicit formulas. This contribution is specifically captured as the trace of the Schwartz kernel $ k(x, y) = \delta(y - \lambda x) $:
\[
\int k(x, x) \, dx = \int \delta(x - \lambda x) \, dx = \frac{1}{|1 - \lambda|}.
\]  

The adele class space $Y_K$ possesses a subtle and delicate structure primarily due to the ergodicity of the action of $K^\times$ on $ \mathbb{A}_K$ under its Haar measure. Even  when $Y_K$ is viewed naively as a topological space, this complexity remains evident and $Y_K$ appears to be highly singular.

Two theoretical frameworks offer valuable methods for analyzing  such spaces, inspired by insights from the geometric case of foliations: \textbf{noncommutative geometry} and \textbf{topos theory}. Each has distinct merits. Noncommutative geometry leverages the powerful tools of functional analysis and Hilbert space operators, while topos theory operates in close synergy with algebraic geometry and sheaf theory. \vspace{.03in} 

The adele class space $ Y_K $ was initially studied using noncommutative geometry, particularly through the crossed product noncommutative algebra $ C_0(\mathbb{A}_K) \rtimes K^\times $. (\cites{C, CM}).  

A notable finding (\cite{CC4}) shows that for $ K = \mathbb{Q} $, the quotient $ X_\mathbb{Q} $ of the space $ Y_\mathbb{Q} := \mathbb{Q}^\times \backslash \mathbb{A}_\mathbb{Q} $ by the maximal compact subgroup $ \widehat{\mathbb{Z}}^* $ of $ C_\mathbb{Q} $ corresponds to the set of points of a naturally associated topos: the \emph{scaling topos}. This topos is defined as the semidirect product of the half-line $ [0, \infty) $ with the multiplicative action of positive integers.
The adelic space $ X_\mathbb{Q} $, along with its cover  
\[
Y_\mathbb{Q} := \mathbb{Q}^\times \backslash \mathbb{A}_\mathbb{Q} \stackrel{\pi}{\longrightarrow} X_\mathbb{Q} = \mathbb{Q}^\times \backslash \mathbb{A}_\mathbb{Q} / \widehat{\mathbb{Z}}^*
\]
offers a geometric framework for the conjectural connection between primes and knots. This  is realized through the periodic orbits $ C_p $ (with length $ \log p $ under the idelic scaling action) corresponding to rational primes, and their preimages $ \pi^{-1}(C_p) $.  
%\vspace{.02in}

 \begin{figure}
 \centering
\includegraphics[scale=0.75]{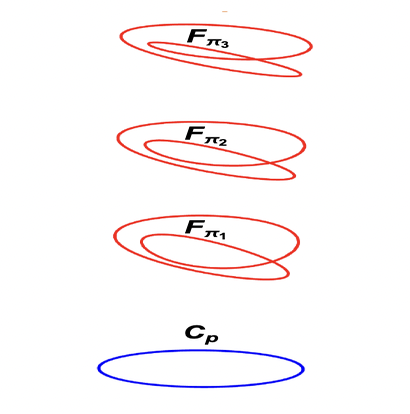}\\
\caption{A typical example of the inverse image of the periodic orbit $C_p$ is a union of components $F_\pi$  where the prime $p$ is unramified and factors as $p=\pi_1\pi_2\pi_3$ in $L$.}\label{fig1}
 \end{figure}

 The main result of this paper is the construction  of a functor that associates finite covers of the space $X_\Q$ to finite abelian extensions  of $ \mathbb{Q} $. For each such extension $\Q\subseteq L $, there is a corresponding open subgroup $ W \subset \widehat{\mathbb{Z}}^* $, defined as the kernel of the natural Galois map $ \widehat{\mathbb{Z}}^* \to \text{Gal}(L/\mathbb{Q}) $, arising from the field inclusions $ \mathbb{Q} \subseteq L \subset \mathbb{Q}^{\text{ab}} $. The associated finite cover of $ X_\mathbb{Q} $ is defined as the double quotient  
$$
X_\mathbb{Q}^L := \mathbb{Q}^\times \backslash \mathbb{A}_\mathbb{Q} / W.
$$  
In section~\ref{sec2} we show that the geometric properties of the finite cover $ \pi^L: X_\mathbb{Q}^L \to X_\mathbb{Q} $ reflect the ramification structure of the extension $ \Q\subseteq L$  and that  the monodromy of the periodic orbits $C_p$ is determined by the Galois action of the arithmetic Frobenius at the prime $p$ over which the cover is unramified. This justifies the assertion that the maximal abelian extension $ \mathbb{Q}^{\text{ab}} $ of the rationals finds its geometric counterpart in the adele class space $ Y_\mathbb{Q}$,  interpreted as the maximal abelian cover  of the adelic space $ X_\mathbb{Q} $.
 \begin{theorem1}\label{mainintro}
  The association $L \rightarrow\left(\pi^L: X_{\Q}^L \to X_{\mathbb{Q}}\right)$ defines a contravariant functor from finite abelian extensions of $\Q$ to finite covers of $X_\Q$. Moreover one has:
  \begin{enumerate}
\item[(i)] The finite set $R$ of places at which the cover ramifies is the union of the archimedean place with the set of primes at which $L$ ramifies.
\item[(ii)] Let $p \notin R$ then the monodromy of $C_p$ in $X_{\mathbb{Q}}^L$ is the element of $\text{Gal}(L/\mathbb{Q})$ given by the Frobenius $\mathrm{Frob}_p$.
\item[(iii)] The connected components of the inverse image of $C_p$ are circles labeled by the places of $L$ over the prime $p$. 
\end{enumerate}
 \end{theorem1}
 Furthermore, the "classically visible" geometry of $ X_\mathbb{Q} $ is captured by the union $ \mathcal{X}_\mathbb{Q} $ which includes all periodic orbits $ C_p $ (as well as a single point corresponding to the archimedean place) and the generic orbit $ \eta \simeq \mathbb{R}_+^* $, under the scaling action of $ \mathbb{R}_+^* $. The topology of $ \mathcal{X}_\mathbb{Q} $ corresponds to that of a covering of $ \overline{\text{Spec} \, \mathbb{Z}} $, where each place $ v $ is covered as follows:\vspace{.02in}  

- For a rational prime $ p $, the place $ v $ is covered by the periodic orbit $ C_p $, which forms a circle of length $ \log p $.  \vspace{.01in}

- For the archimedean place $ v = \infty $, it is covered by a single point.\vspace{.01in}

- The generic point of $ \text{Spec} \, \mathbb{Z} $ is covered by the generic orbit $ \eta\subset X_\Q$.  \vspace{.03in}

The density of the generic point in $ \text{Spec} \, \mathbb{Z} $ is mirrored by the density of the generic orbit $ \eta $ in $ X_\mathbb{Q} $, which in turn reflects the density of ideles within adeles. More precisely the topology of the space of orbits of the scaling action on $ \mathcal{X}_\mathbb{Q} $ coincides with the topology of $ \overline{\text{Spec} \, \mathbb{Z}} $.\vspace{.02in} 

The covering $ \mathcal{X}_\mathbb{Q} $ of $ \overline{\text{Spec} \, \mathbb{Z}} $, together with the natural scaling action of $ \mathbb{R}_+^* $, can be interpreted as the class field theory counterpart of the points of a genuine  curve $\mathscr C$  over  the analogue of the algebraic closure of a finite field, equipped with the action of the Frobenius automorphisms. This class field theory counterpart to classical algebraic geometry over finite fields provides valuable insights into the essence of the geometry of $ \mathscr{C} $ and the nature of the coefficients involved in defining its points. The Riemann-Roch theorem for the ring $\Z$ proven in \cite{CC6,CC7}, along with the interpretation of $\mathbb{Z}$ as a ring of polynomials  over the combinatorial realization $\mathbb{S}$ of the sphere spectrum, suggests heuristically that $\mathscr{C}$ resembles a one-dimensional projective space $\mathbf{P}^1$.

Interpreting $\mathcal{X}_\mathbb{Q}$ as the class field theory counterpart of the curve $\mathscr C$  suggests  intriguing insights. For instance, the unique periodic orbit consisting of a single point is associated with the archimedean place  thus the point $\infty$ appears to be the only one defined over $\mathbb{S}$. The following table presents an excerpt from this proposed interpretation

\begin{center}
\renewcommand{\arraystretch}{0.5}
\begin{tabular}{c|c|c}
%\hline 
& &\\ Function field $K=\F_q(t)$ & Global field $\Q$ & Class field  of $\Q$\\
 & & \\
\hline 
& &\\ $\overline{\text{Spec}\,\F_q[t]}$  &  $\overline{\text{Spec}\,\mathbb{Z}}$ & $X_\Q$\\
 & &\\
%\hline 
& &\\ Points of~   $\mathbf P^1\otimes_{\F_q}\bar\F_q$ & The curve $\mathscr C$ &$\mathcal X_\Q$ \\
& & \\
%\hline 
& &\\ $\mathrm{Frob}_p$ on the points  & Action of Weil group& Scaling action of $\R_+^*$ \\ & & \\
%\hline 
& &\\ $\mathrm{Frob}_p$ orbits   & Places of $\Q$ &Scaling  orbits $C_p$  \\   & &\\
%\hline 
& &\\ Finite abelian     & Finite abelian &Finite abelian cover    \\cover  of $\mathbf P^1$ & extension of $\Q$ & $\mathcal X_\Q^L$ of $\mathcal X_\Q$\\& & \\ %\hline
& &\\ Pro-Abelian   & Maximal abelian &$ \left(\mathcal Y_\Q=\varprojlim_L \mathcal X_\Q^L\right)\to \mathcal X_\Q$   \\   cover $\widehat{\mathbf P}^1_{\text{ab}}$  & extension $\Q^{ab}$ &\\& & \\ %\hline
\end{tabular}
\end{center}
 
The next diagram highlights the parallel between the perspective in classical algebraic geometry and that in adelic geometry related to the maximal abelian extension of  global fields:

  \[
 \begin{tikzcd}
     \widehat{\mathbf P}^1_{\text{ab}}\arrow[d,"\text{pro-ab}"'] & & \Q^{ab}\arrow[d,dash,"\text{Iwasawa}"',,"{\widehat\Z^*}"] & & \mathcal Y_{\Q}\arrow[d,"{\widehat\Z^*}"']\\
     \mathbf P^1\otimes_{\mathbb F_q}\overline{\mathbb F}_q & & \Q & & \mathcal X_\Q
 \end{tikzcd} 
 \]

We now exploit the Mumford-Mazur analogy between knots and primes to investigate the mutual linking of the periodic orbits $ C_p$.  
The linking number $ \left(C_1, C_2\right)$ of two disjoint oriented knots in  $S^3$ is determined using the natural morphism $\pi_1(C_1) \to \pi_1(S^3 - C_2)^{\text{ab}}$ and the isomorphism $\pi_1(S^3 - C_2)^{\text{ab}} \cong \mathbb{Z}$, which is derived from Alexander-Pontryagin duality combined with the Hurewicz isomorphism.  
The Mumford-Mazur analogy between knots and rational primes is summarized in the following table:

\begin{center}
\renewcommand{\arraystretch}{0.5}
 \begin{tabular}{c | c}
%\hline 
& \\ \textbf{Knot} $C$ & \textbf{Prime} $p$ \\
& \\
\hline 
& \\ Inclusion $C \subset S^3$ & $r^*: \operatorname{Spec} \mathbb{F}_p \hookrightarrow \operatorname{Spec} \mathbb{Z}$ \\ & \\
%\hline 
& \\ Knot complement $X=S^3-C$ & $\operatorname{Spec} \mathbb{Z} \backslash\{p\}$ \\ & \\
%\hline 
& \\$\pi_1\left(S^3-C\right)^{a b}=\mathbb{Z}$ & $\pi_1^{e t}(\operatorname{Spec} \mathbb{Z}[\frac 1p])^{a b}=\mathbb{Z}_p^*$ \\
& \\
%\hline 
& \\ $\pi_1\left(C_1\right) \rightarrow \pi_1\left(S^3-C_2\right)^{a b}$ & $\pi_1^{e t}\left(\operatorname{Spec} \mathbb{F}_p\right) \rightarrow \pi_1^{e t}(\operatorname{Spec} \mathbb{Z}[\frac 1q])^{a b}$ \\
& \\
%\hline
& \\ Linking Number $\left(C_1, C_2\right)$ & $p \in \mathbb{Z}_q^*$ \\
& \\
%\hline
\end{tabular}\end{center}

For a given rational prime $ p $, the linking with all other (rational) primes $ q $ is understood at once as the natural map $r^*:\pi_1^{e t}\left(\operatorname{Spec} \mathbb{F}_p\right) \rightarrow \pi_1^{e t}(\operatorname{Spec} \mathbb{Z}_{(p)})^{a b}$. The removal of all primes except $ p $ from $ \text{Spec} \, \mathbb{Z} $ is effected algebraically by replacing $ \text{Spec} \, \mathbb{Z} $ with the spectrum of the local ring $ \mathbb{Z}_{(p)} $. In our set-up, primes are represented by the periodic orbits $ C_p $, and their linking is revealed explicitly through the following theorem

\begin{theorem2}\label{main2intro}
	 Let  $p$ be a rational prime. Let ${\rm F r o b}_{p}\in \pi_1^{e t}(\Spec(\F_p))$ be the canonical generator. 
     \begin{enumerate}
         \item[(i)] The inverse image $\pi^{-1}(C_p)\subset Y_\Q$ of the periodic orbit $C_p$  is canonically isomorphic to the mapping torus of the multiplication by $r^*\left\{{\rm F r o b}_{p}\right\}$  in the abelianized \'etale fundamental group $\pi_1^{e t}(\Spec \, \Z_{(p)})^{ab}$.
          \item[(ii)] The canonical isomorphism in $(\rm i)$ is equivariant for the action of the idele class group.
         \item[(iii)] The monodromy of the periodic $C_p$ in $\pi^{-1}(C_p)\subset Y_\Q$ is equal to the natural map $r^*:\pi_1^{e t}\left(\operatorname{Spec} \mathbb{F}_p\right) \rightarrow \pi_1^{e t}(\operatorname{Spec} \mathbb{Z}_{(p)})^{a b}$, and determines the linking of the prime $p$ with all other primes.
         \end{enumerate}
     \end{theorem2}

 \begin{figure}[H]
 \centering
\includegraphics[scale=0.3]{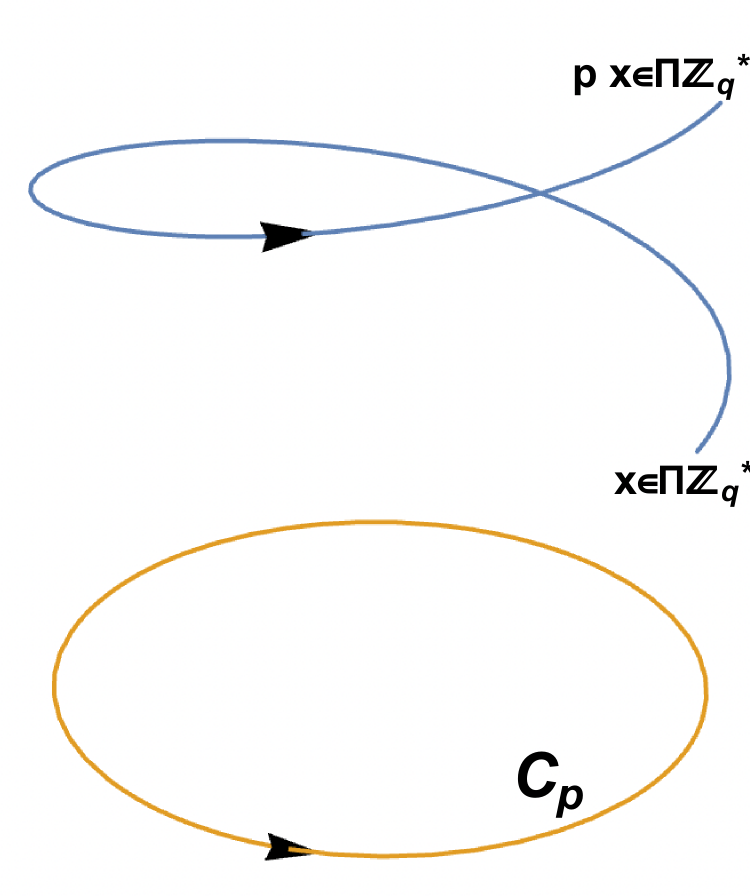}\\
\caption{The monodromy of the periodic $C_p$ in $\pi^{-1}(C_p)\subset Y_\Q$}\label{monodrom}
 \end{figure}

In section~\ref{sec3} we provide a proof to the fact that the abelianized étale fundamental group $ \pi_1^{\text{et}}(\text{Spec}\,\mathbb{Z}_{(p)})^{\text{ab}} $ is isomorphic to the product $ \prod_{q \neq p} \mathbb{Z}_q^* $. The image $ r^*(\text{Frob}_p) \in \pi_1^{\text{et}}(\text{Spec}\,\mathbb{Z}_{(p)})^{\text{ab}} $ corresponds to the element $ p $ diagonally embedded in the product $ \prod_{q \neq p} \mathbb{Z}_q^* $.  
The injectivity of the map $\mathbb{Z}\ni n \mapsto p^n \in \mathbb{Z}_q^* $ for any $ q \neq p $ highlights the non-trivial linking of the periodic orbits $ C_p $ and $ C_q $.  \vspace{.02in}

The fundamental properties underpinning the analogy between $ \overline{\text{Spec}\,\mathbb{Z}}$ and the 3-sphere $ S^3 $ are summarized in the following table:\vspace{.08in}

\begin{center}
\renewcommand{\arraystretch}{0.5}
\begin{tabular}{c |c}
%\hline 
&\\ \textbf{Sphere} $S^3$ &  $\overline{\text{Spec}\mathbb{Z}}$ \\
 & \\
\hline 
& \\$\pi_1\left(S^3\right)=\{1\}$ & $\pi_1^{e t}(\overline{\operatorname{Spec} \mathbb{Z}})=\{1\}$ \\
& \\
%\hline 
& \\$H^3\left(S^3, \mathbb{Z}\right)=\mathbb{Z}$ & $H^3\left(\overline{\operatorname{Spec} \mathbb{Z}}, \mathbf{G}_m\right)=\mathbb{Q} / \mathbb{Z}$ \\ & \\
%\hline 
\end{tabular}
\end{center}
 \vspace{0.4cm}

 Let $j: \eta \rightarrow \operatorname{Spec} \mathbb{Z}$ be the generic point. A crucial element in the proof of the Artin-Verdier Poincar\'e duality is the exact sequence of sheaves on $\Spec \Z$
 $$
0 \rightarrow \mathbf{G}_m \rightarrow j_* \mathbf{G}_{m, \eta} \rightarrow \coprod_p \mathbb{Z}{\mid\!p} \rightarrow 0
$$
Here, $\mathbf{G}_m$ denotes the representable sheaf giving the multiplicative group, and the last term on the right is the sheaf of Cartier divisors. 
One then utilizes 
the long exact sequence of cohomology groups
$$
\begin{gathered}
0 \rightarrow \mathrm{H}^2\left(\Spec \Z, \mathbf{G}_{m}\right) \rightarrow \mathrm{H}^2\left(\eta, \mathbf{G}_{m, \eta}\right) \stackrel{r_1}{\rightarrow} \bigoplus_p \Q / \Z\rightarrow \\ \rightarrow \mathrm{H}^3\left(\Spec \Z, \mathbf{G}_{m}\right) \rightarrow \mathrm{H}^3\left(\eta, \mathbf{G}_{m, \eta}\right) \rightarrow 0.\end{gathered}
$$
In this sequence,  $\mathrm{H}^3(\Spec \Z, \mathbf{G}_{m})$ appears as the cokernel of the map $r_1$, which, as established by the Brauer-Hasse-Noether theorem, is a single copy of $\Q/\Z$. Due to the presence of the real archimedean place with Brauer group $\Z/2\Z$, there is a subtlety when dealing with higher cohomology groups. 
 
 Two aspects of this development are particularly relevant to our context, and we will now address them. The first is the pivotal role of the inclusion $j: \eta \to \text{Spec} \, \mathbb{Z}$ of the generic point, which suggests a hidden role of the  relative cohomology for the pair $ (\text{Spec} \, \mathbb{Z}, \eta)$.  In the adelic framework, a similar structure emerges. For instance, the spectral realization of zeros of $L$-functions can be interpreted as the action of the idele class group on $ H^1(Y_\mathbb{Q}, \eta) $, where the generic orbit $ \eta $ (i.e., the idele classes) takes on the role of the generic point \cite{CM}, Theorem 4.116.  
A second aspect of the proof of Artin-Verdier duality that is relevant in our context is its reliance solely on abelian extensions of $ \mathbb{Q} $. Indeed, O. Gabber demonstrated\footnote{private communication} that the duality continues to hold when the \'etale site is replaced by the site restricted to abelian extensions. 

The considerations above suggest a promising direction of development using topos theory and the analogue of the "abelian  site" of $ \text{Spec} \, \mathbb{Z} $ over $ X_\mathbb{Q} $. Furthermore, the intricate topos-theoretic structure of the scaling topos, together with its structure sheaf, offers the potential to intrinsically understand the topos counterpart of the finite abelian covers of $ X_\mathbb{Q} $.  

The scaling topos serves as the topos viewpoint of the adelic space $ X_\mathbb{Q} $ and, as shown in \cite{CC4}, its interpretation via an extension of scalars from the arithmetic site (see \cite{CC3}) equips the scaling topos with a natural structure sheaf. The sections of this sheaf are piecewise affine, continuous, convex functions. A remarkable consequence of this additional structure is observed in the periodic orbits $ C_p $: each $ C_p $, when equipped with the restriction of the scaling topos’s structure sheaf, becomes an analogue of an elliptic curve within the framework of characteristic one.\vspace{.02in}

Noncommutative geometry provides powerful techniques for understanding the complex geometry of the adelic spaces $ Y_\mathbb{Q} $ and $ X_\mathbb{Q} $. This approach is particularly appealing because it equips us with the necessary tools for the spectral realization of $L$-functions. A clearer understanding of the adelic spaces $ Y_\mathbb{Q} $ and $ X_\mathbb{Q} $ can be gained by considering their "semilocal" versions, which are obtained by restricting the places of $ \mathbb{Q} $ to a finite subset that includes the archimedean place.  
Following the presentation of the semilocal setup in Section \ref{semilocprep}, we demonstrate in Section \ref{sheafsemiloc} that the semilocal algebras constitute a sheaf of algebras over $\Spec\,\Z$. The semilocal algebras $\cS(\A_S)\rtimes \Z_S^\times$ are constructed as the cross products of the Bruhat-Schwartz algebras $\cS(\A_S)$ of functions on semilocal adeles, by the multiplicative groups given by $\Z_S^\times=\mathbf{G}_m(S^c)$, sections of the sheaf $\mathbf{G}_m$ on the open set complement of $S$ in $\Spec\,\Z$. We first show (Proposition \ref{structure}) that the semilocal Bruhat-Schwartz algebra $\cS(\A_S)$ form a sheaf $\sheaf$ of commutative algebras over $\Spec\,\Z$. We then obtain the following result which establishes the compatibility of the noncommutative geometric constructions with the algebraic geometry of $\Spec\,\Z$. 
\begin{theorem3}	
\label{structure1intro}  
 		\begin{enumerate}
 			\item The algebraic cross product $\sheaf\rtimes \mathbf{G}_m$ defines a sheaf  of algebras on  $\Spec\Z$ such that for every finite set of places $S\ni \infty$
 			$$
 			\left(\sheaf\rtimes \mathbf{G}_m\right)(S^c)=\cS(\A_S)\rtimes \Z_S^\times 			$$
 			\item The stalk of $\sheaf\rtimes \mathbf{G}_m$ at the generic point $\eta$ is the global cross product $\cS(\A_\Q)\rtimes \Q^\times$. 			\item The global sections of $\sheaf\rtimes \mathbf{G}_m$ form the cross product $\cS(\R)\rtimes \{\pm 1\}$.
 		\end{enumerate}
\end{theorem3}

In noncommutative geometry, a fundamental method for handling complex quotients is the interplay between the homotopy-theoretic treatment of such spaces and the cross-product $ C^*$-algebras that encode their topology. The central tool in this context is the assembly map  underlying the Baum-Connes conjecture. This conjecture, proven for cross-products by abelian groups \cite{HK}, is sufficient for computing the $K$-theory of the $C^*$-algebras relevant in our context, and was used in \cite{RM} to compute the $K$-theory of the  cross product $C^*$-algebra associated to the adele class space of global fields. 

In section \ref{sect4},  we investigate the $ K $-theory of the "visible part" (a precise definition is given there) of $ X_\mathbb{Q} $, localized to a finite set of places. Specifically, for concreteness we treat the case $S=\{p,q,\infty\}$, and in that case we compute the $ K $-theory of the $ C^* $-algebra $ A $ corresponding to the union, within the semi-local space $ X_{\mathbb{Q}, S} $, of the generic orbit and the three periodic orbits $ C_p, C_q, $ and $ C_\infty $.  
The resulting isomorphism $ K_0(A) \simeq \mathbb{Z}^3 $ captures the presence of the three periodic orbits, while $ K_1(A) \simeq \mathbb{Z}^2 $ reflects the one-dimensional nature of the periodic orbits $ C_p $ and $ C_q $.

Finally, section \ref{sect5} provides a general description of the homotopy quotient $ \mathcal{Y}_S $, which offers a homotopy-theoretic approach to the semilocal adele class space $ Y_S $. The space $ \mathcal{Y}_S $ is locally compact, with dimension equal to the cardinality $ \#S $. It is equipped with an action of $ \mathbb{R}^n $, where $ n = \#S - 1 $, by translations. This action defines a codimension 1 lamination, with the space of leaves being $ Y_S $.

   \section{Notations}
 Let $K$ be a global field. We denote by $K^\times=K\setminus\{0\}$ the multiplicative group of non-zero elements of $K$.
  
 The adele ring of a global field $K$ (in particular when $K=\mathbb Q$) is the restricted product $\mathbb A_K:=\prod (K_v,\mathcal O_v)\subset \prod K_v$ consisting of the tuples $(a_v)$ where $a_v$ lies in the subring $\mathcal{O}_v \subset K_v$ for all but finitely many places $v$. The index $v$ ranges in the set $\Sigma_K$ of all places of the global field $K$, 
 $K_v$ is the local field completion of $K$ at the place $v$, and $\mathcal{O}_v$ is the corresponding valuation ring when the place $v$ is non-archimedean.

We denote the adele class space of $ \mathbb{Q} $ by $ Y_\mathbb{Q} = \mathbb{Q}^\times \backslash \mathbb{A}_\mathbb{Q} $. The action of $ \mathbb{Q}^\times $ on the rational adeles is defined through its image under the natural diagonal embedding $ \mathbb{Q} \hookrightarrow \mathbb{A}_\mathbb{Q} $, which defines the principal adeles. Throughout this paper we denote by $X_\Q$ the natural quotient  $\mathbb{Q}^\times \backslash \mathbb{A}_\mathbb{Q}/\widehat\Z^*$, where $\widehat\Z^*=\prod_{v<\infty}\Z_v^*$ is the subgroup of $\A_\Q$ of all $p$-adic units.
  
We recall from \cite{CC4} that the space of points (up-to isomorphism) of the sector $X_\Q=\mathbb{Q}^\times \backslash \mathbb{A}_\mathbb{Q}/\widehat\Z^*$ of the adele class space $Y_\Q$ is in canonical bijection with the space of points of the scaling topos $\mathscr S$. This is the Grothendieck topos $[0,\infty)\rtimes \mathbb N^\times$ of $\N^\times$-equivariant sheaves of sets on the real interval $[0,\infty)$. The scaling topos is  canonically isomorphic to the category $\mathfrak{Sh}(\mathcal C,J)$ of sheaves of sets on the site $(\mathcal C,J)$. Here, $\mathcal C$ denotes the small category of bounded open (possibly empty) intervals $\Omega\subset [0,\infty)$ including those of the form $[0,a)$ for $a>0$. The multiplicative monoid $\N^\times$ acts on the objects of $\mathcal C$ and defines the morphisms between two objects as $\operatorname{Hom}_{\mathcal C}(\Omega,\Omega') =\{n\in\N^\times|n\Omega\subset\Omega'\}$. The category $\mathcal C$ is endowed with the Grothendieck topology $J$ generated by ordinary coverings of the topological space $[0,\infty)$: we refer to \cite{CC4} for more details.

The points of the adelic space $X_\Q$ (and also of the  scaling topos) are in canonical bijection with abstract rank one groups and rank one subgroups of $\R$.

Let $p$ be a rational prime. The collection of points of  $X_\Q$ corresponding to subgroups $H\subset \mathbb R$ which are abstractly isomorphic to the subgroup $H_p\subset\Q$ of fractions with denominators a power of $p$ determine a subspace $C_p\subset X_\Q$ called the ($p$-)periodic orbit. The map $\mathbb{R}_{+}^* \rightarrow C_p$, $\lambda \mapsto \lambda H_p$ induces an isomorphism  $\mathbb{R}_{+}^*/ p^{\mathbb{Z}} \stackrel{\sim}{\rightarrow} C_p$ showing that $C_p$ is a circle of length $\log p$. We refer to \cite{CC4} for more details.

Finally, we briefly recall the notion of Bruhat-Schwartz function \cite{Br}. 
On a real vector space the Bruhat-Schwartz functions are the smooth functions all of whose derivatives are rapidly decreasing. On a non-archimedean local field $K$, a Bruhat-Schwartz function is a locally constant function of compact support. \newline The space of Bruhat-Schwartz functions on the adeles $\mathbb{A}_K$ of a global field $K$ is defined to be the restricted tensor product $\bigotimes'_v\mathcal{S}(K_v)=\varinjlim_E\left(\bigotimes_{v\in E}\mathcal S(K_v)\right)$
 of Bruhat-Schwartz spaces $\mathcal{S}(K_v)$ of local fields, where $E$ is a finite set of places of $K$. 
 %The case $K=\Q$ in considered in Section \ref{sect4}.

\section{Abelian extensions of $\Q$ and finite covers of the adelic space $X_\Q$}\label{sec2}

In this section, we  construct a natural functor which associates to a finite abelian extension of the field of rational numbers a covering of $X_\Q$ with finite fibers. \newline We first define a covering of $X_\Q$ as follows
\begin{defn}\label{coverdef}Let $G$ be a finite abelian group and $\chi :\widehat\Z^*\to G$ be a surjective continuous morphism. We let 
\begin{equation}\label{ccdef}
	X_\Q^\chi:=Y_\Q\times_{\widehat\Z^*}G, \  \  \pi_\chi: X_\Q^\chi\to X_\Q
\end{equation}
be the quotient of the product $Y_\Q\times G$ by the action of $\widehat\Z^*$ and $\pi_\chi$ be the first projection on $Y_\Q/\widehat\Z^*=X_\Q$.
\end{defn}
For $x\in X_\Q=\Q^\times\backslash \A_\Q/\widehat\Z^*$ in the class of the adele $a=(a_v)\in \A_\Q$, the set of places
\begin{equation}\label{zxdef}
Z(x)=\{v\in \Sigma_\Q\mid a_v=0\}, \  \ 
\end{equation}
is independent of the choice of $a$ since the elements of $\Q^\times \times \widehat\Z^*$ are ideles.\newline
The group $G$ acts on $X_\Q^\chi$ since the action of $k\in G$ on $Y_\Q\times G$ by multiplication $(z,g)\mapsto (z,gk)$ is compatible with the quotient by $\widehat\Z^*$.  Indeed for $(z',g')\sim (z,g)$ one has $w\in \widehat\Z^*$ with $(z',g')=(zw,\chi(w)g)$ and
$$ (z',g'k)= (zw,(\chi(w)g)k)=(zw,\chi(w)(gk))\sim (z,gk)$$
The nature of the action of $G$ is specified by the next lemma.
\begin{lem}\label{Gacts}Let $G$, $\chi$ and $\pi_\chi$ be as in Definition \ref{coverdef} and $x\in X_\Q$.\newline  $(i)$~The group $G$ acts transitively on the fiber $F_x=\pi_\chi^{-1}(x)$.\newline
$(ii)$~Assume  $\infty \notin Z(x)$, then the stabilizer of any  $y\in F_x$ is 
\begin{equation}\label{isot}
\chi\left(H\right)\subset G, \  \ H=\prod_{v\in Z(x)}\Z_v^*\subset \widehat\Z^*.
\end{equation}
\end{lem}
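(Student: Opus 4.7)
My strategy is to fix a lift of $x$ in $Y_\Q$, use it to parametrize the fiber $F_x$ by $G$, and then read off both transitivity and stabilizer directly from the defining equivalence $(z,g)\sim(zw,\chi(w)g)$.

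Pick a representative adele $a=(a_v)$ of $x$ and let $z\in Y_\Q$ be its class. For any $[(z',g')]\in F_x$ the images in $X_\Q$ coincide, so $z'=zw$ for some $w\in\widehat\Z^*$, and the defining relation gives $(z',g')\sim(z,\chi(w)^{-1}g')$. Hence every fiber element admits a representative with first entry equal to the chosen $z$. Transitivity of the $G$-action $[(z,g)]\mapsto[(z,gk)]$ is then immediate: given $[(z,g_1)],[(z,g_2)]\in F_x$, take $k=g_1^{-1}g_2$. This settles $(i)$.

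For $(ii)$, the same bookkeeping shows that $k$ fixes $[(z,g)]$ iff $(z,gk)\sim(z,g)$, i.e.\ iff $k=\chi(w)$ for some $w\in\widehat\Z^*$ stabilizing $z$ in $Y_\Q$; in particular the stabilizer is the same for every $y\in F_x$ and equals $\chi(\mathrm{Stab}_{\widehat\Z^*}(z))$. So it remains to identify $\mathrm{Stab}_{\widehat\Z^*}(z)$ with $H$. The condition $wz=z$ unfolds as $wa=qa$ in $\A_\Q$ for some $q\in\Q^\times$. Since $w_\infty=1$ by the very definition $\widehat\Z^*=\prod_{v<\infty}\Z_v^*$, the archimedean component of this equation reads $a_\infty=q a_\infty$; the hypothesis $\infty\notin Z(x)$, i.e.\ $a_\infty\neq 0$, forces $q=1$. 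Once $q=1$, the finite-place equations $w_v a_v=a_v$ are automatic for $v\in Z(x)$ and force $w_v=1$ for $v\notin Z(x)$, yielding $\mathrm{Stab}_{\widehat\Z^*}(z)=\prod_{v\in Z(x)}\Z_v^*=H$.

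The argument is essentially a direct computation in the double quotient defining $X_\Q^\chi$, with no real technical obstacle. The only nontrivial input is the elimination of the rational factor $q$, and this is exactly where the hypothesis $\infty\notin Z(x)$ must enter: without it the relation $q a_\infty=a_\infty$ does not pin $q$ down, and the stabilizer would generally become strictly larger than $\chi(H)$, reflecting the density of $\Q^\times$ in $\R$.
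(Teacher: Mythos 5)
Your argument is correct and follows essentially the same route as the paper's proof: fix a lift $z\in Y_\Q$ of $x$, reduce every fiber element to a representative $(z,g)$, and identify the stabilizer by unwinding $wz=z$ as $wa=qa$ and using $a_\infty\neq 0$ to force $q=1$. Nothing to add.
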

\proof $(i)$~Let $x\in X_\Q=Y_\Q/\widehat\Z^*$ be in the class of $y\in 
Y_\Q$. Let $(z,g)\in Y_\Q\times G$ such that $ \pi_\chi(z,g)=x$. There exists $w\in \widehat\Z^*$ such that $wz=y$ and one has, modulo the action of $\widehat\Z^*$, $(z,g)\sim (y,\chi(w)g)$. Thus every element of  $F_x=\pi_\chi^{-1}(x)$ is equivalent to a pair $(y,k)$ for some $k\in G$ and the action of $G$ on such pairs is transitive.\newline
$(ii)$~Let $(z,g)\in Y_\Q\times G$ represent $y\in \pi_\chi^{-1}(x)$. Let $k\in G$ such that $yk=y$. One then has  $(z,g)\sim (z,gk)$ modulo  the action of $\widehat\Z^*$. Thus there exists  $w\in \widehat\Z^*$ with 
$$
zw=z, \ \ \chi(w)g=gk
$$
Let $a\in \A_\Q$ in the class of $z\in \Q^\times\backslash\A_\Q=Y_\Q$. The equality $zw=z$ means that there exists $q\in \Q^\times$ such that $aw=qa$. Since the archimedean component of $a$ is non-zero this implies that $q=1$, and that for any finite prime $p$ one has $a_pw_p=a_p$. Thus it holds if and only if $w\in H=\prod_{v\in Z(x)}\Z_v^*\subset \widehat\Z^*$ and since $k=\chi(w)$ one obtains $(ii)$.\endproof 
\subsection{The covering of the periodic orbits $\pi:\pi^{-1}(C_p)\to C_p$}\label{sect2.1}

This part is devoted to demonstrating the non-triviality of the covering restricted on the periodic orbits $C_p$. \newline   
To understand the role of the quotient by $\mathbb{Q}^\times$ in the above constructions, we first examine the situation prior to applying this quotient. 
\begin{lem}\label{trivia} The quotient map $p:\A_\Q\to \A_\Q/\widehat\Z^*$ admits a continuous  section 
$$\rho:\A_\Q/\widehat\Z^*\to \A_\Q,\quad  p\circ \rho(x)=x, \  \forall x\in \A_\Q/\widehat\Z^*.$$	
\end{lem}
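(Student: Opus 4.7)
The plan is to construct a global section by patching together local sections at each place. Since $\widehat\Z^* = \prod_{p < \infty} \Z_p^*$ acts trivially on the archimedean component of the adeles, I take the identity there. At each finite prime $p$, every nonzero element of $\Q_p$ can be written uniquely as $p^n u$ with $n \in \Z$ and $u \in \Z_p^*$, so $\Q_p/\Z_p^*$ is the disjoint union of the class $[0]$ and the classes $[p^n]$, $n \in \Z$. I would then define the local section by $\rho_p([0]) := 0$ and $\rho_p([p^n]) := p^n$, and take the identity at the archimedean place.

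Rather than verify continuity of $\rho$ directly at the level of the quotient, I would introduce the auxiliary map $\tilde\rho : \A_\Q \to \A_\Q$ defined componentwise by $\tilde\rho((a_v))_\infty := a_\infty$, and $\tilde\rho((a_v))_p := 0$ when $a_p = 0$, $\tilde\rho((a_v))_p := p^{v_p(a_p)}$ otherwise. Since the valuation is $\Z_p^*$-invariant and $p^{v_p(a_p)}$ differs from $a_p$ by a unit at every finite place, the map $\tilde\rho$ is $\widehat\Z^*$-invariant and satisfies $p \circ \tilde\rho = p$. Hence $\tilde\rho$ factors through the quotient as $\tilde\rho = \rho \circ p$ for a unique set-theoretic section $\rho$, and since $p$ is a quotient map, continuity of $\rho$ reduces to continuity of $\tilde\rho$.

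The latter I would check componentwise. At each finite prime $p$, each orbit $p^n \Z_p^*$ is open and closed in $\Q_p$, so $\tilde\rho_p$ is locally constant on $\Q_p \setminus \{0\}$; continuity at $0$ follows from the fact that $a \to 0$ in $\Q_p$ forces $v_p(a) \to \infty$, whence $p^{v_p(a)} \to 0$. Since moreover $\tilde\rho_p(\Z_p) \subseteq \Z_p$ at every finite place, $\tilde\rho$ is continuous with respect to the restricted product topology on $\A_\Q$: the preimage of a basic open set $\prod_{v\in F} U_v \times \prod_{v\notin F}\Z_v$ is $\prod_{v\in F} \tilde\rho_v^{-1}(U_v) \times \prod_{v\notin F}\Z_v$.

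The only delicate point is ensuring compatibility with the restricted product structure across infinitely many places simultaneously, and this is precisely what the detour through $\tilde\rho$ resolves: the pointwise prescription preserves integrality at each finite place, so no uniformity is required and continuity reduces to the elementary componentwise statements above.
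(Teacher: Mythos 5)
Your proof is correct and takes essentially the same approach as the paper: define the section componentwise by writing $a_p = w_p\,p^{v_p(a_p)}$ with $w_p\in\Z_p^*$ at each finite place (and the identity at $\infty$), observe that the resulting map on $\A_\Q$ is $\widehat\Z^*$-invariant so it descends to the quotient, and verify continuity place by place. Your explicit check that $\tilde\rho_p^{-1}(\Z_p)=\Z_p$, so that componentwise continuity is compatible with the restricted product topology, supplies a detail the paper's one-line continuity claim leaves implicit.
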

\proof Let $a=(a_v)\in \A_\Q$.  For each finite place $p$, there exists an element $w_p\in \Z_p^*$ such that $a_p=w_p \rho_p$ where $\rho_p=0$ if $a_p=0$ and is a power of $p$, $\rho_p= p^n$, when $a_p\neq 0$. Both $w_p$ and the power $n$ are uniquely determined if $a_p\neq 0$. One lets $\rho_\infty=a_\infty$ and one has the equality 
$$
a=w\rho, \  \ w_v= w_p, \ \forall v=p<\infty,\  w_\infty =1, \ \rho_v=\rho_p, \ \forall v=p<\infty, \rho_\infty =a_\infty
$$
The element $\rho(a)\in \A_\Q$ is uniquely defined, only depends on the class of $a$ in $\A_\Q/\widehat\Z^*$ and belongs to this class. The map $\rho$ is continuous since each component $\rho_v(a)$ is a  continuous function of $a_v$. Thus $\rho$ gives the required section. \endproof 
Next, we consider the division by $\Q^\times$. The cross-section $\rho$  constructed above  is not equivariant under the action of $\Q^\times$. This action, which is the diagonal action on adeles, modifies  each component $\rho(x)_p$ of 
$\rho(x)$ by its product $q\rho(x)_p$. Specifically, for a prime $q$, this results in replacing $\rho(x)_p$ with $q\rho(x)_p$,  which is not a power of $p$ when $p\neq q$. \newline
To demonstrate that the fibration $\pi:\Q^\times\backslash\A_\Q=Y_\Q\to X_\Q$  is not trivial we examine  its restriction to a periodic orbit $C_p$. 

\begin{prop}\label{mappingtorus} Let $p$ be a prime and $H=\prod_{v\neq p}\Z_v^*$ with $v<\infty$.

$(i)$~The map $\eta:H\times \R_+^*\to \A_\Q$ defined  componentwise by
$$
\eta((h,\lambda))_v=h_v, \forall v<\infty, \ v\neq p, \ \ \eta((h,\lambda))_p=0, \ \ \eta((h,\lambda))_\infty =\lambda
$$
induces a canonical isomorphism $\tilde \eta$ of the mapping torus of the homeomorphism of the compact group $H$ given by multiplication by $p\in H$ and the inverse image $\pi^{-1}(C_p)$. \newline
$(ii)$~The canonical isomorphism $\tilde \eta$ as in $(i)$, is equivariant for the action of the idele class group.\newline
$(iii)$~The covering $\pi:\pi^{-1}(C_p)\to C_p$ has a non-trivial monodromy given by multiplication by $p$ in $H$.	
\end{prop}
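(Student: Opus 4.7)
The plan is to recognize $\pi^{-1}(C_p)$ as the image in $Y_\Q$ of adeles in a ``normal form'' for $C_p$---those with $a_p=0$, $a_v\in\Z_v^*$ for every finite $v\neq p$, and $a_\infty>0$---and then to determine which elements of $\Q^\times$ preserve this normal form. The only $q\in\Q^\times$ which is a $v$-adic unit at every finite $v\neq p$ is $q\in\pm p^\Z$; positivity at infinity selects $q\in p^\Z$. The resulting equivalence $(h,\lambda)\sim(p^nh,p^n\lambda)$ is precisely the mapping torus relation for multiplication by $p\in H$.

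\textbf{Proof of (i).} First I would check that $\eta(h,\lambda)$ is in normal form and that its image in $X_\Q$ lies in $C_p$: after killing $\widehat\Z^*$ the finite non-$p$ components collapse to $1$ and the residual $\Q^\times$-action reduces to $p^\Z$ acting on $\lambda\in\R_+^*$, matching $C_p\simeq\R_+^*/p^\Z$. Composing with $\A_\Q\to Y_\Q$ then gives a continuous map $H\times\R_+^*\to\pi^{-1}(C_p)$. To identify its fibers, I would observe that $\eta(h,\lambda)$ and $\eta(h',\lambda')$ agree modulo $\Q^\times$ iff there exists $q\in\Q^\times$ with $qh_v=h'_v\in\Z_v^*$ for all finite $v\neq p$ and $q\lambda=\lambda'>0$; the argument in the Strategy above then forces $q=p^n$ and yields exactly the mapping torus relation. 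For surjectivity, any class in $\pi^{-1}(C_p)$ admits a normal-form representative via a suitable $(\widehat\Z^*,\Q^\times)$-adjustment: the $\widehat\Z^*$-part bringing the non-$p$ finite entries into $\Z_v^*$ (the section $\rho$ of Lemma~\ref{trivia} is a convenient bookkeeping device), and the $\Q^\times$-part correcting the sign at infinity and clearing any residual power of $p$. Compactness of $H$ and of the circle $\R_+^*/p^\Z\cong C_p$ upgrades the continuous bijection to a homeomorphism $\tilde\eta$.

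\textbf{Proof of (ii) and (iii).} For (ii), write $C_\Q\simeq\widehat\Z^*\times\R_+^*$ and note that this subgroup preserves $\pi^{-1}(C_p)$: $\widehat\Z^*$ acts within the fibers of $\pi$, while $\R_+^*$ scales $C_p$ onto itself. Under $\tilde\eta$ an element $(w,t)\in\widehat\Z^*\times\R_+^*$ acts on a pair $(h,\lambda)$ by $\bigl((w_vh_v)_{v\neq p},\,t\lambda\bigr)$---the $p$-component of $w$ acts trivially because it multiplies the zero $p$-component of $\eta(h,\lambda)$---and this action manifestly commutes with the mapping torus relation, giving equivariance of $\tilde\eta$. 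Statement (iii) is then immediate: by construction the mapping torus of a homeomorphism $\phi\colon H\to H$ is the $H$-bundle over the circle with monodromy $\phi$, so here the monodromy of $\pi\colon\pi^{-1}(C_p)\to C_p$ is multiplication by $p\in H$. Non-triviality follows since for any prime $q\neq p$ the element $p\in\Z_q^*$ has infinite order.

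\textbf{Main obstacle.} I expect the main bookkeeping difficulty to lie in the surjectivity step of (i): one must rule out adele classes in $\pi^{-1}(C_p)$ that are only representable by adeles with additional zero components beyond $a_p$. This is excluded by the description of $C_p$ recalled in Section~2 via rank-one subgroups isomorphic to $H_p$, which forces $Z(x)=\{p\}$ for every $x\in C_p$; once this is used, the normal form is available and the remainder of the argument is straightforward.
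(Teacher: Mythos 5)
Your proposal is correct and follows essentially the same route as the paper: reduce to the normal form $a_p=0$, $a_v\in\Z_v^*$ for finite $v\neq p$, $a_\infty>0$, observe that the subgroup of $\Q^\times$ preserving this form is $p^\Z$ (the paper writes $q=q'p^k$ and divides by the prime-to-$p$ part $q'$), identify the quotient with the mapping torus, lift $C_\Q$ as $\widehat\Z^*\times\R_+^*$ for equivariance, and deduce non-triviality of the monodromy from the injectivity of $n\mapsto p^n$ in $\Z_q^*$. The only quibble is the wording of your surjectivity step: the normalization is achieved purely by a $\Q^\times$-adjustment (dividing by the prime-to-$p$ part of $q$), not by an auxiliary $\widehat\Z^*$-action on the class in $Y_\Q$, but this does not affect the substance of the argument.
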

\proof $(i)$~We first show that every element of $\pi^{-1}(C_p)$ is in the range of $\tilde \eta$.
The elements of $C_p$ are the double classes of adeles of the form
\begin{equation}\label{adlam}
a(\lambda), \  \ a(\lambda)_\infty= \lambda >0, \ a(\lambda)_p=0, \ \ a(\lambda)_v=1, \forall v<\infty, \ v\neq p	
\end{equation}
 The fiber $\pi^{-1}(C_p)\subset \Q^\times\backslash\A_\Q$ is formed of adeles $\alpha=(\alpha_v)$ such that there exists $q$ in $\Q^\times$, $u \in  \widehat\Z^*$ and $\lambda>0$ such that $\alpha= q a(\lambda)u$, \ie 
 \begin{equation}\label{adlam1}
\alpha_v=q u_v, \forall v<\infty, \ v\neq p, \ \ \alpha_p=0, \ \alpha_\infty =q \lambda.
\end{equation}
Let $q=q' \times p^k$ where $q'$ is prime to $p$. Then 
   modulo $\Q^\times$,  $\alpha$ is equivalent to $\alpha'=q'^{-1}\alpha$ and one  has
    \begin{equation}\label{adlam2}
\alpha'_v\in \Z_v^*,\  \forall v<\infty, \ v\neq p, \ \ \alpha'_p=0, \ \alpha'_\infty =p^k \lambda
\end{equation}
  This shows that $\alpha$ belongs to the range of   $\tilde \eta$. Moreover two elements  $\alpha, \beta\in H\times \R_+^*$ fulfill $\tilde \eta(\alpha)=\tilde \eta(\beta)$ if and only if they are in the same class modulo $\Q^\times$, \ie there exists $q\in \Q^\times$ with $\eta(\beta)=q\eta(\alpha)$. This implies, using \eqref{adlam}, that $q>0$ and that $\vert q\vert_v=1$ for $v<\infty, \ v\neq p$ so that $q=p^n$ for some $n\in \Z$. Thus $\tilde \eta$ 
  \begin{equation}\label{eta}
  \tilde \eta: p^\Z\backslash \left(H\times \R_+^*\right)\to \pi^{-1}(C_p)
  \end{equation}
  is an isomorphism. In general, one constructs the mapping torus of an homeomorphism $\phi:Y\to Y$ as the flow given by translations of the $\R$ variable in the quotient
 $(Y\times \R)/\Z$ where the generator of $\Z$ acts by the map 
 $$
 \psi(y,t):=(\phi(y),t+1).
 $$
 Taking $Y=H$, $\phi(y)=py$ and using the logarithm  $\log:\R_+^*\to \R$, the scaling action of $\R_+^*$ on $p^\Z\backslash \left(H\times \R_+^*\right)$ becomes identical to the flow associated to the homeomorphism of the compact group $H=\prod_{v\neq p}\Z_v^*$ given by multiplication by $p\in H$. By using the logarithm with base $p$ one obtains the translation $t\mapsto t+1$.\newline
 $(ii)$~The idele class group $C_\Q$ is lifted in ideles as the subgroup ${\widehat \Z}^*\times \R_+^*$. It acts on $H\times \R_+^*$ by multiplication componentwise and similarly on adeles. This shows that $\tilde \eta$ is equivariant for the action of $C_\Q$ on both sides. \newline
$(iii)$~The map $\pi:\pi^{-1}(C_p)\to C_p$ is a fibration,  whose monodromy can be identified with an element of the automorphism group \text{Aut}(F), describing how the fiber is transformed after traversing a loop in the base space. From $(i)$, \ie  the identification of $\pi^{-1}(C_p)$ with the mapping torus, it follows that  this monodromy is given by the multiplication by $p$ in $H$. Furthermore for each prime $q\neq p$  the map $\Z\to \Z_q^*$ given by $n\mapsto p^n$ is injective since $\Q$ injects in $\Q_q$. A fortiori the map $n\mapsto p^n\in H$ is injective. Thus  this monodromy is non-trivial and the map $\pi:\pi^{-1}(C_p)\to C_p$ does not admit a continuous cross section.  \endproof 

\subsection{Ramification of the cover $\pi_\chi: X_\Q^\chi\to X_\Q$}
We now return to the finite cover $\pi_\chi$ from Definition \ref{coverdef} to examine its “ramification”. Let $x\in X_\Q$ with $\infty\notin Z(x)$. Lemma \ref{Gacts} establishes that the group $G$ acts transitively on the fiber $F_x=\pi_\chi^{-1}(x)$, with isotropy described by \eqref{isot}. Consequently, one has
\begin{equation}\label{rami0}
G \  \textit{acts freely transitively on} \ F_x \iff \chi(\prod_{v\in Z(x)}\Z_v^*)={1}.
\end{equation}
In cases where  condition \eqref{rami0} is satisfied, the fiber $F_x$ with its $G$-action behaves like a principal bundle. Otherwise, “ramification” occurs. Let us introduce its definition
\begin{defn}\label{deframi} Let $S$ be a finite set of places. By definition, the morphism $X_\Q^\chi\stackrel{\pi_\chi}{\to}X_\Q$ is said to be {\em unramified outside $S$} if and only if  the following condition holds for every $x\in X_\Q$:
$$
Z(x)\cap S=\emptyset~ \Longrightarrow \ G \  \textit{acts freely transitively on} \ F_x
$$	
\end{defn}
Since the morphism $\chi:\widehat\Z^*\to G$ is continuous and $G$ is finite, the kernel $\ker \chi$ is an open subgroup of $\widehat\Z^*$ and hence contains a neighborhhod of $1$. The topology  of $\widehat\Z^*=\prod \Z_p^*$ is the product topology of the infinite product, and a basis of neighborhoods of $1$ is formed of product open sets where $C$ is a finite set of primes and the $W_p$'s, with $p\in C$, are open in $\Z_p^*$
$$
W=\prod_{p\in C} W_p \times \prod_{q\notin C} \Z_q^*.
$$
Thus there exists a finite set $C$ of primes such that 
$
\chi(\prod_{q\notin C} \Z_q^*)=\{1\}
$.
Let
\begin{equation}\label{cchi}
C(\chi):=\{p\in\Sigma_\Q\setminus\{\infty\}\mid \chi(\Z_p^*)\neq \{1\}\}
\end{equation}
then the set $C(\chi)$ is finite and $\chi(\prod_{q\notin C(\chi)} \Z_q^*)=\{1\}$ by continuity.
In other words, the finite set $C(\chi)$ corresponds to the set of divisors of the {\em conductor} of $\chi$. The conductor is defined, using the description of $\widehat\Z^*$ as the projective limit of the finite groups $(\Z/m\Z)^*$, as the gcd of all integers $m$ for which $\chi$  factors through the canonical projection $\widehat\Z^*\to (\Z/m\Z)^*$.\newline
The following proposition determines the finite sets of places $S\ni \infty$  outside which $\pi_\chi$ is unramified. 
\begin{prop}\label{ramiprop} Let $X_\Q^\chi\stackrel{\pi_\chi}{\to}X_\Q$ be as in Proposition \ref{coverdef}. Then for a finite set of places $S\ni \infty$ one has 
$$
X_\Q^\chi\stackrel{\pi_\chi}{\longrightarrow}X_\Q \  \textit{is unramified outside} \ S \iff C(\chi) \subset S.
$$	
\end{prop}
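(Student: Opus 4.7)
The plan is to reduce "unramified outside $S$" to an algebraic condition on $\chi$ via Lemma \ref{Gacts}. Since $S \ni \infty$, any $x \in X_\Q$ with $Z(x) \cap S = \emptyset$ automatically satisfies $\infty \notin Z(x)$, so Lemma \ref{Gacts}(ii) applies: the stabilizer of any $y \in F_x$ under the $G$-action equals $\chi(\prod_{v \in Z(x)} \Z_v^*)$. Combined with the transitivity from part (i), freely transitive action is equivalent to this stabilizer being trivial. I therefore reformulate Definition \ref{deframi} as the condition: for every $x \in X_\Q$ with $Z(x) \cap S = \emptyset$, one has $\chi(\prod_{v \in Z(x)} \Z_v^*) = \{1\}$.

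The implication $C(\chi) \subset S \Rightarrow \pi_\chi$ unramified outside $S$ is then nearly formal. Given such an $x$, each $v \in Z(x)$ lies outside $S$, hence outside $C(\chi)$ by hypothesis, so by the very definition \eqref{cchi} of $C(\chi)$ one has $\chi(\Z_v^*) = \{1\}$ for every $v \in Z(x)$; the image of the product is therefore trivial.

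For the converse, I need to show that if $p$ is a finite place outside $S$, then $\chi(\Z_p^*) = \{1\}$. The strategy is to exhibit an $x \in X_\Q$ with $Z(x) = \{p\}$ exactly, so that the reformulated hypothesis collapses directly to the required triviality. I take the adele $a = (a_v)$ with $a_p = 0$, $a_\infty = 1$, and $a_q = 1$ for every finite prime $q \neq p$; this is a bona fide adele since almost all components lie in $\Z_q^*$. Its class $x = [a] \in X_\Q$ has $Z(x) = \{p\}$ by \eqref{zxdef}, and this set is disjoint from $S$ because $p \notin S$ while $\infty \in S$ but $\infty \notin Z(x)$. Applying the reformulated hypothesis yields $\chi(\Z_p^*) = \{1\}$, so $p \notin C(\chi)$, whence $C(\chi) \subset S$.

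I do not expect any genuine obstacle: the substantive content has already been packaged into Lemma \ref{Gacts}, and the assumption $\infty \in S$ is precisely what permits that lemma's stabilizer description to be invoked at the relevant points. The remainder reduces to unpacking the definition of $C(\chi)$ and producing one explicit adele with prescribed zero locus, both immediate.
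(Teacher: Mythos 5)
Your proof is correct and follows essentially the same route as the paper's: reduce ramification to the triviality of $\chi(\prod_{v\in Z(x)}\Z_v^*)$ via Lemma \ref{Gacts} (this is exactly the paper's condition \eqref{rami0}), and for the converse exhibit a point with $Z(x)=\{p\}$ — your explicit adele is precisely a representative of the periodic orbit $C_p$ used in the paper. One small caveat in your forward direction: since $Z(x)$ may be infinite, the step from $\chi(\Z_v^*)=\{1\}$ for each $v\in Z(x)$ to $\chi\left(\prod_{v\in Z(x)}\Z_v^*\right)=\{1\}$ is not purely formal — the subgroup algebraically generated by the factors is only dense in the product — and requires the continuity of $\chi$; the paper establishes exactly this point (that $\chi(\prod_{q\notin C(\chi)}\Z_q^*)=\{1\}$ ``by continuity'') in the paragraph preceding the proposition, and you should cite it rather than call the step immediate.
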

\proof  Assume  that $C(\chi) \subset S$ then $\chi(\prod_{v\notin S}\Z_v^*)=\{1\}$. It follows  that if $Z(x)\cap S=\emptyset$ one has $\chi(\prod_{v\in Z(x)}\Z_v^*)=\{1\}$ and \eqref{rami0} shows that $G$ acts freely transitively on $F_x$. Conversely, if $\exists p\in C(\chi) $ with $p\notin S$, then  for $x$ in the periodic orbit $C_p$,   $Z(x)=\{p\}$ fulfills $Z(x)\cap S=\emptyset$ but then, since $\chi(\Z_p^*)\neq \{1\}$, \eqref{rami0} shows that $G$ does not act freely  on $F_x$.\endproof 
The following corollary highlights the \emph{unavoidable} ramified role of the archimedean place.
\begin{cor}\label{ramicor}  There exists a smallest finite set $S$ of places outside which $\pi_\chi$ is unramified. This set is given by $C(\chi)\cup {\infty}$, where $C(\chi)$ is defined in \eqref{cchi}.
\end{cor}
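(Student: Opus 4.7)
The plan is to split the argument into two inclusions: $S_0 := C(\chi) \cup \{\infty\}$ is a valid choice for $S$, and every valid $S$ contains $S_0$. The first inclusion is immediate from Proposition~\ref{ramiprop} since $\infty \in S_0$ and $C(\chi) \subseteq S_0$. For the inclusion $C(\chi) \subseteq S$, I would re-run the argument from the converse direction of Proposition~\ref{ramiprop}: if some prime $p \in C(\chi)$ were missing from $S$, then $x \in C_p$ satisfies $Z(x) = \{p\}$, hence $Z(x) \cap S = \emptyset$ and $\infty \notin Z(x)$; Lemma~\ref{Gacts}(ii) and \eqref{rami0} then yield the non-trivial stabilizer $\chi(\Z_p^*)$, contradicting the hypothesis. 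This part of the argument does not use $\infty \in S$.

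The substantive step, reflecting the "unavoidable" role of the archimedean place, is to show $\infty \in S$; assume henceforth that $\chi$ is non-trivial. Suppose $\infty \notin S$, so by the previous step $C(\chi) \subseteq S$. I would construct $x$ with $Z(x) \cap S = \emptyset$ and non-trivial stabilizer by taking the adele $a$ with $a_\infty = 0 = a_p$ for a prime $p \notin S$ chosen below and $a_v = 1$ elsewhere, giving $Z(x) = \{p, \infty\}$. The isotropy description of Lemma~\ref{Gacts}(ii) then has to be extended to $\infty \in Z(x)$: without $a_\infty \neq 0$ the relation $aw = qa$ no longer forces $q = 1$, and $q$ instead runs over $\pm p^{\Z} \subseteq \Q^\times$. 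After accounting for the free component $w_p \in \Z_p^*$, which lies in the kernel of $\chi$ because $p \notin C(\chi)$, the stabilizer equals the subgroup $\langle \chi(-1), \chi(p) \rangle \subseteq G$, where $\chi(p)$ is interpreted through the conductor $m$ of $\chi$. Since primes outside $S$ are dense in $(\Z/m\Z)^*$ by Dirichlet's theorem and $\chi$ surjects onto the non-trivial group $G$, one can select $p \notin S$ with $\chi(p) \neq 1$, producing the desired ramification.

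The main obstacle I expect is the extension of the stabilizer calculation of Lemma~\ref{Gacts}(ii) to adeles vanishing at $\infty$: the original argument relies crucially on $a_\infty \neq 0$ to eliminate the rational scaling factor, and dropping this hypothesis exposes a genuinely larger isotropy coming from the diagonal action of $\Q^\times$ at primes where $a$ vanishes. Once this computation is under control, the Dirichlet density input producing a prime $p \notin S$ with $\chi(p) \neq 1$ is routine, but the combination of the two is what ultimately forces $\infty$ into every admissible $S$ and pins down the minimum to $C(\chi) \cup \{\infty\}$.
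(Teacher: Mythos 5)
Your proof is correct, but the key step --- forcing $\infty$ into every admissible $S$ --- is carried out by a genuinely different construction than the paper's. You pick a single auxiliary prime $p\notin S$ with $\chi(p)\neq 1$ (via Dirichlet), take the adele vanishing exactly at $\{p,\infty\}$, and redo the isotropy computation of Lemma~\ref{Gacts}(ii) in the case $\infty\in Z(x)$: since $a_\infty=0$ no longer forces $q=1$, the scaling factor runs over $\pm p^{\Z}$ and the stabilizer becomes $\langle\chi(-1),\chi(p)\rangle\neq\{1\}$, violating freeness. The paper instead takes the far more degenerate adele equal to $1$ on $T=S\cup C(\chi)$ and $0$ everywhere else (so $Z(x)$ is cofinite and contains $\infty$), and shows by choosing integers $n$ coprime to $T$ with arbitrary residue modulo the conductor that the whole fiber $\pi_\chi^{-1}(x)$ collapses to a single point, i.e.\ the stabilizer is all of $G$. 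Your route buys a concrete identification of the isotropy at a "doubly degenerate" point and connects directly to the monodromy element $\chi(p)=\mathrm{Frob}_p$, at the cost of invoking Dirichlet's theorem (or at least a generation argument for $(\Z/m\Z)^*$ by primes outside a finite set); the paper's route needs only the Chinese remainder theorem but exhibits total rather than partial ramification. Both arguments, like the statement itself, tacitly assume $G\neq\{1\}$ --- your explicit flagging of this is reasonable, since for trivial $\chi$ the cover is unramified everywhere and the "smallest $S$" would be empty.
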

\proof By Proposition \ref{ramiprop}, it is enough to show  that a finite set $S$ of places outside which $\pi_\chi$ is unramified necessarily contains $\infty$. Assume $\infty\notin S$ and let $a\in \A_\Q$ be the adele defined componentwise as 
$$
a_v=1, \forall v\in S\cup C(\chi),  \ \ a_v=0, \ \forall v\notin S\cup C(\chi).
$$
Let $x\in X_\Q$ be the class of $a$ modulo $\Q^\times\times {\widehat\Z}^*$. Let us show that the fiber $\pi_\chi^{-1}(x)$ is reduced to a single element. Let $T=S\cup C(\chi)$ and $(z,g)\in \A_\Q\times G$ be such that the class of $z$ modulo $\Q^\times\times {\widehat\Z}^*$ is $x$. One has $z_v=0$, $\forall v\notin T$ and in particular $z_\infty=0$, moreover there exists $(q,w)\in\Q^\times\times {\widehat\Z}^*$ such that $z_v=qw_v$ for all $v\in T$. Then, for any integer $n$ with $(n,p)=1$ for all $p\in T$ one  has 
$$
z_v=\frac qn (n_vw_v), \ \forall v\in T.
$$
Thus, modulo $\Q^\times\times {\widehat\Z}^*$ one obtains, $$
(z,g)\sim ((a_vn_vw_v),g)\sim (a,\chi((n_vw_v)g).
$$
Since one can choose the integer $n$ such that its residue modulo the conductor of $\chi$ is arbitrary, this shows that all elements of the fiber $\pi_\chi^{-1}(x)$ are equivalent. \endproof 
\begin{rem}\label{quotienttop} The subsets of $\A_\Q$ of the form, for $S\subset \Sigma_\Q$ finite, 
$$
\Omega_S:=\{a\in \A_\Q\mid a_v\neq 0,\ \forall v\in S\}
$$
   are both open (for  the topology  of $\A_\Q$) and invariant under the multiplicative  action of ideles, hence they define open sets in $X_\Q$.  The ordered collection of these  open sets   is similar to the ordered collection of  open sets in the Zariski topology of $\overline{\Spec \Z}$.
\end{rem}
\subsection{Monodromy of $\pi_\chi$ over $C_p$} 
Next, we  transpose Proposition \ref{mappingtorus} to the finite covers $X_\Q^\chi\stackrel{\pi_\chi}{\longrightarrow}X_\Q$ and prove their non-triviality.
\begin{thm}\label{mappingtorus1} Let $X_\Q^\chi\stackrel{\pi_\chi}{\longrightarrow}X_\Q$ be as in Definition \ref{coverdef} and $p\notin C(\chi)$ a non-archimedean prime.\newline
$(i)$~The restriction of $\pi_\chi$ to $\pi_\chi^{-1}(C_p)$ is a $G$-principal bundle over the periodic orbit $C_p$.\newline
$(ii)$~The isomorphism $\tilde \eta$ as in \eqref{eta} induces a canonical isomorphism of $\pi_\chi^{-1}(C_p)$ with the mapping torus of the multiplication by $\chi(p)\in G$, where $p$ is viewed as an element of $\prod_{\ell \in C(\chi)} \Z_\ell^*$.\newline 
$(iii)$~The  monodromy of $C_p$ in $\pi_\chi^{-1}(C_p)$  is given by the action of the element $\chi(p)$.	
\end{thm}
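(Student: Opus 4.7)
The approach is to lift the statements through the $\widehat\Z^*$-equivariant identification $\tilde\eta$ of Proposition \ref{mappingtorus}, working on $(H\times\R_+^*)\times G$ and then descending to the fiber product $X_\Q^\chi=Y_\Q\times_{\widehat\Z^*}G$ of Definition \ref{coverdef}.

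Part $(i)$ is a direct consequence of the ramification analysis. For any $x\in C_p$ with representative as in \eqref{adlam} one has $Z(x)=\{p\}$, and the hypothesis $p\notin C(\chi)$ forces $Z(x)\cap S=\emptyset$ for $S=C(\chi)\cup\{\infty\}$. Proposition \ref{ramiprop} then guarantees that $G$ acts freely and transitively on each fiber $F_x$, and since the $G$-action on $X_\Q^\chi$ is continuous, $\pi_\chi^{-1}(C_p)\to C_p$ is a principal $G$-bundle.

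For parts $(ii)$ and $(iii)$ I would begin from the identification
$$
\pi_\chi^{-1}(C_p)\;=\;\bigl(\pi^{-1}(C_p)\times G\bigr)/\widehat\Z^*\;\cong\;\bigl(H\times\R_+^*\times G\bigr)/(p^\Z\times\widehat\Z^*),
$$
in which $p^\Z$ acts as $(h,\lambda,g)\mapsto(ph,p\lambda,g)$ by Proposition \ref{mappingtorus}, while $w\in\widehat\Z^*$ acts as $(h,\lambda,g)\mapsto(wh,\lambda,\chi(w)g)$, using the equivariance of $\tilde\eta$ from Proposition \ref{mappingtorus}$(ii)$. Since $p\notin C(\chi)$, $\chi$ is trivial on $\Z_p^*\subset\widehat\Z^*$, so the $\widehat\Z^*$-action factors through the projection to $H=\prod_{v\neq p,\,v<\infty}\Z_v^*$, and on this quotient it is free and transitive on the first coordinate. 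Collapsing this action yields the explicit isomorphism $[h,\lambda,g]\mapsto(\lambda,\chi(h)^{-1}g)$ from $(H\times\R_+^*\times G)/\widehat\Z^*$ to $\R_+^*\times G$, under which the residual $p^\Z$-action reads $(\lambda,g)\mapsto(p\lambda,\chi(p)^{-1}g)$; here $p$ makes sense as an element of $\prod_{\ell\in C(\chi)}\Z_\ell^*$ precisely because $p$ is a unit at every $\ell\in C(\chi)$. This last quotient is the mapping torus of multiplication by $\chi(p)$ on $G$ with period $\log p$ in the scaling flow, proving $(ii)$; reading off the monodromy by lifting a loop $\lambda_0\mapsto p\lambda_0$ in $C_p$ then gives $(iii)$.

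The delicate step is the final sign/orientation bookkeeping: one must check that the choice of equivalence $(z,g)\sim(zw,\chi(w)g)$ combined with the $p^\Z$-action inherited from the $\Q^\times$-quotient and the parametrization of $C_p$ conspire so that the monodromy is multiplication by $\chi(p)$ rather than by $\chi(p)^{-1}$. Once this convention is fixed, the remainder of the computation is formal algebra on quotients and parallels the untwisted case of Proposition \ref{mappingtorus}.
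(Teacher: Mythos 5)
Your proposal is correct and follows essentially the same route as the paper: the paper likewise obtains $\pi_\chi^{-1}(C_p)=\pi^{-1}(C_p)\times_{\widehat\Z^*}G$ and applies $-\times_{\widehat\Z^*}G$ to the isomorphism $\tilde\eta$ of \eqref{eta} to produce $\tilde\eta_\chi:p^\Z\backslash(G\times\R_+^*)\to\pi_\chi^{-1}(C_p)$, which is exactly your explicit collapse $[h,\lambda,g]\mapsto(\lambda,\chi(h)^{-1}g)$. Your derivation of $(i)$ from Proposition \ref{ramiprop} and your flagged sign convention are consistent with the conventions already fixed in Proposition \ref{mappingtorus}, so nothing further is needed.
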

\proof By construction $X_\Q^\chi:=Y_\Q\times_{\widehat\Z^*}G$ and the projection $\pi_\chi$ fulfills $\pi_\chi(z,g)=\pi(z)$ where $\pi:Y_\Q\longrightarrow X_\Q$ is the projection. We thus get 
\begin{equation}\label{prelim}
\pi_\chi^{-1}(C_p)=\pi^{-1}(C_p)\times_{\widehat\Z^*} G 
\end{equation}
By \eqref{eta}, one gets the isomorphism 
$$
\tilde \eta: p^\Z\backslash \left(H\times \R_+^*\right)\longrightarrow \pi^{-1}(C_p)
$$
where $H=\prod_{v\neq p}\Z_v^*$. Using the map $\chi:H\to G$ and \eqref{prelim} one obtains, applying $-\times_{\widehat\Z^*} G$, an isomorphism 
$$
\tilde \eta_\chi:p^\Z\backslash \left(G\times \R_+^*\right)\to \pi_\chi^{-1}(C_p)
$$
which identifies $\pi_\chi^{-1}(C_p)$ with the mapping torus of the multiplication by $\chi(p)\in G$.\endproof Figure \ref{fig1} shows the general aspect of this mapping torus, depending on the subgroup of $G$ generated by $\chi(p)\in G$.  Finally the next corollary shows the non-triviality of the cover $\pi_\chi$.
\begin{cor}\label{monocor}  Let $\pi_\chi$ be as above. There exists infinitely many primes $p$ for which the restriction of the covering $\pi_\chi$ to $\pi_\chi^{-1}(C_p)$ is non-trivial.
\end{cor}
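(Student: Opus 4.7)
The plan is to apply Theorem~\ref{mappingtorus1}(iii) to reduce the corollary to a purely arithmetic question about the character $\chi$. By that theorem, for any prime $p \notin C(\chi)$ the restriction $\pi_\chi^{-1}(C_p) \to C_p$ is a $G$-principal bundle over a circle whose monodromy is $\chi(p) \in G$. Such a bundle is trivial if and only if its monodromy is the identity, so the corollary reduces to showing that $\chi(p) \neq 1_G$ for infinitely many primes $p \notin C(\chi)$.

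Assuming $G$ is nontrivial (if $G = \{1\}$ the cover $\pi_\chi$ itself is trivial and the corollary is vacuous), the next step is to use the conductor description preceding Proposition~\ref{ramiprop}: since $\ker \chi$ is an open subgroup of $\widehat{\Z}^*$, the morphism $\chi$ factors through a surjection $\bar\chi : (\Z/m\Z)^* \twoheadrightarrow G$ for some integer $m$ whose prime divisors are exactly the elements of $C(\chi)$. Surjectivity onto the nontrivial group $G$ guarantees the existence of a residue class $a \in (\Z/m\Z)^*$ with $\bar\chi(a) \neq 1_G$.

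The last step is to invoke Dirichlet's theorem on primes in arithmetic progressions to produce infinitely many primes $p$ with $p \equiv a \pmod m$. For any such $p$, coprimality with $m$ forces $p \notin C(\chi)$, and $\chi(p) = \bar\chi(a) \neq 1_G$. Theorem~\ref{mappingtorus1}(iii) then identifies the cover $\pi_\chi^{-1}(C_p) \to C_p$ with the mapping torus of a nontrivial translation of $G$, and in particular it is non-trivial.

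The main obstacle, such as it is, lies in unpacking the definitions rather than in any genuine difficulty: one must verify that the factorization of $\chi$ through a finite quotient of $\widehat{\Z}^*$ is compatible enough with the embedding $\Z \hookrightarrow \widehat{\Z}^*$ on units to make Dirichlet's theorem directly applicable. Since the conductor is defined precisely so that this factorization through $(\Z/m\Z)^*$ exists, and since primes $p \nmid m$ map to their residue classes in $(\Z/m\Z)^*$ under the diagonal embedding, this compatibility is automatic and the proof is essentially complete.
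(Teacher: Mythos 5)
Your proposal is correct and follows essentially the same route as the paper, whose entire proof is the one-line appeal to Dirichlet's theorem producing infinitely many primes $p$ with $\chi(p)\neq 1$; you have simply unpacked the reduction via Theorem~\ref{mappingtorus1}(iii) and the factorization of $\chi$ through $(\Z/m\Z)^*$ that the authors leave implicit. The only substantive addition is your (reasonable) observation that $G$ must be nontrivial for the statement to have content.
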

\proof By Dirichlet's density theorem there are infinitely many primes $p$ such that $\chi(p)\neq 1$.\endproof 

\subsection{The class field  functor}
We first review briefly the classical results of class field theory for the field $\Q$.
The Kronecker-Weber theorem states that every finite abelian extension $L$ of $\Q$ in $\C$ is contained in $\Q\left(\zeta_n\right)$ for some $n$, where $\zeta_n$ is a primitive $n$-th root of $1$.  The smallest $n$ such that $L \subseteq \Q\left(\zeta_n\right)$ is called the conductor $m$ of $L/ \Q$. 
One has a surjective homomorphism  $$(\Z / m \Z)^* \cong \operatorname{Gal}(\Q(\zeta_m) / \Q) \stackrel{\theta}{\to } \operatorname{Gal}(L / \Q).$$
The  action of $r\in (\Z / m \Z)^* $ is by  $\theta(r)(\zeta_m)=\zeta_m^r$.
The Artin map associates to each prime $p$ not dividing $m$ an element $\mathrm{Frob}_p\in \operatorname{Gal}(L / \Q)$ defined as follows. Let  $\wp_L$ be a prime ideal of the ring of integers $\mathcal{O}_L$ in $L$ lying above a rational prime  $p$. By definition the decomposition group of $\wp_L$ is the subgroup of the Galois group $\operatorname{Gal}(L / \Q)$ consisting of automorphisms  that leave $\wp_L$ invariant:
$$
D_p=\{\sigma \in \operatorname{Gal}(L / \Q) \mid \sigma(\wp_L)=\wp_L\} .
$$
The decomposition group $D_p$ acts on the residue field $\mathcal{O}_L / \wp_L$, and the kernel of this action is the inertia group. Since  $L$ is an  abelian extension, the decomposition and inertia groups do not depend on the choice of $\wp_L$ above $p$, and since $p$ is unramified in $L$ the inertia group is trivial. This provides a canonical isomorphism 
$$
D_p\cong \operatorname{Gal}(\mathcal{O}_L / \wp_L / \F_p)
$$ 
and thus a canonically defined element $\mathrm{Frob}_p\in D_p\subset \operatorname{Gal}(L / \Q)$ corresponding to the Frobenius generator of $\operatorname{Gal}(\mathcal{O}_L / \wp_L / \F_p)$. The key fact then is 
\begin{fact}\label{artinrec}
	The Frobenius $\mathrm{Frob}_p$ is equal to the image of $p$ under the map $\theta:(\Z / m \Z)^*\to \operatorname{Gal}(L / \Q)$.
\end{fact}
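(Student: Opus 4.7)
The plan is to reduce to the cyclotomic case $L = \Q(\zeta_m)$ and then verify the congruence defining the Frobenius directly.

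First I would observe that if $L \subseteq \Q(\zeta_m)$ with conductor $m$ and $p \nmid m$, then $p$ is unramified both in $\Q(\zeta_m)$ and in $L$, and the restriction map $\mathrm{Gal}(\Q(\zeta_m)/\Q) \to \mathrm{Gal}(L/\Q)$ is exactly the surjection $\theta$ under the identification $(\Z/m\Z)^* \cong \mathrm{Gal}(\Q(\zeta_m)/\Q)$. Because the Frobenius is functorial under quotients of abelian Galois groups (pick a prime $\mathfrak P$ of $\Z[\zeta_m]$ over $p$ and let $\wp_L = \mathfrak P \cap \mathcal O_L$; then the restriction of $\mathrm{Frob}_{\mathfrak P}$ to $L$ fixes $\wp_L$ and induces the $p$-th power map on $\mathcal O_L/\wp_L$, so it equals $\mathrm{Frob}_{\wp_L}$), it suffices to prove the statement for $L = \Q(\zeta_m)$ itself.

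Next I would carry out the cyclotomic computation. Fix a prime $\mathfrak P$ of $\Z[\zeta_m]$ above $p$. By definition, $\mathrm{Frob}_p \in D_{\mathfrak P}$ is characterized as the unique automorphism satisfying
\[
\mathrm{Frob}_p(x) \equiv x^p \pmod{\mathfrak P}, \quad \forall x \in \Z[\zeta_m].
\]
Consider the candidate $\sigma_p \in \mathrm{Gal}(\Q(\zeta_m)/\Q)$ defined by $\sigma_p(\zeta_m) = \zeta_m^p$, which corresponds to $p \in (\Z/m\Z)^*$. For any element $f(\zeta_m) \in \Z[\zeta_m]$ with $f \in \Z[X]$, the freshman's dream together with Fermat's little theorem applied coefficientwise gives
\[
f(\zeta_m)^p \equiv f(\zeta_m^p) = \sigma_p(f(\zeta_m)) \pmod{p},
\]
hence a fortiori modulo $\mathfrak P$. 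Since $p \nmid m$, the polynomial $X^m - 1$ has distinct roots in the residue field, so $\zeta_m \mapsto \zeta_m^p$ is well-defined and $\sigma_p$ lies in the decomposition group (uniqueness of $\mathrm{Frob}$ in the unramified abelian case handles the fact that it fixes $\mathfrak P$; equivalently, since the extension is abelian, $D_{\mathfrak P}$ does not depend on $\mathfrak P$ and $\sigma_p$ satisfies the defining congruence). Therefore $\sigma_p = \mathrm{Frob}_p$, which under the identification $(\Z/m\Z)^* \cong \mathrm{Gal}(\Q(\zeta_m)/\Q)$ says exactly that $\mathrm{Frob}_p$ corresponds to $p \bmod m$.

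The only step requiring any care is the verification that $\sigma_p$ actually lies in the decomposition group $D_{\mathfrak P}$; this is immediate from abelianness once one checks that the $p$-power congruence holds on a set of generators of the ring of integers, which we did for $\zeta_m$. Combining with the reduction step, the image of $p$ under $\theta: (\Z/m\Z)^* \to \mathrm{Gal}(L/\Q)$ is $\mathrm{Frob}_p$, as claimed. No essential obstacle arises; the proof is entirely classical and the main bookkeeping is keeping track of the compatibility between $\mathrm{Frob}$ in $\mathrm{Gal}(\Q(\zeta_m)/\Q)$ and in $\mathrm{Gal}(L/\Q)$ under the quotient map $\theta$.
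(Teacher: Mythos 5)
Your proof is correct. The paper states this as a classical \emph{Fact} without supplying any argument (it is the explicit cyclotomic form of Artin reciprocity over $\Q$), so there is no internal proof to compare against; your two-step argument --- functoriality of the Frobenius under the quotient $\mathrm{Gal}(\Q(\zeta_m)/\Q)\to\mathrm{Gal}(L/\Q)$, followed by the direct verification that $\sigma_p:\zeta_m\mapsto\zeta_m^p$ satisfies $\sigma_p(x)\equiv x^p\pmod{\mathfrak P}$ on $\Z[\zeta_m]$ --- is exactly the standard one and fills the gap the paper leaves to the reader. One small point of polish: the cleanest way to see that $\sigma_p\in D_{\mathfrak P}$ is to note that the congruence itself forces $\sigma_p(\mathfrak P)\subseteq\mathfrak P$ (if $x\in\mathfrak P$ then $\sigma_p(x)\equiv x^p\equiv 0$), hence $\sigma_p(\mathfrak P)=\mathfrak P$ since $\sigma_p$ permutes the primes over $p$; this is slightly more direct than appealing to uniqueness of the Frobenius or to independence of $D_{\mathfrak P}$ on the choice of $\mathfrak P$.
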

We let $\Q^{\text{ab}}$ be the union of all the cyclotomic fields $\Q\left(\zeta_n\right)\subset \C$. It is the maximal abelian extension $\Q^{\text{ab}}$ of $\Q$. The map $e:\Q/\Z\to \C$ given by $e(s):=\exp(2\pi i s)$ is a group isomorphism of $\Q/\Z$ with the group of roots of unity in $\C$ and we obtain a canonical isomorphism $\theta$ of $\widehat\Z^*=\varprojlim (\Z / m \Z)^* $ with the Galois group $\operatorname{Gal}(\Q^{\text{ab}}/\Q)$.\newline
Given a finite abelian extension $L$ of $\Q$ there exists an isomorphism $\sigma:L\to \Q^{\text {ab }}$ with a subfield of $\Q^{\text {ab }}$. The morphism $\sigma$ is not unique but its range $\sigma(L)$ is unique thus we identify $L$ with this range. We let $W(L)\subset \widehat\Z^*$ be the subgroup 
\begin{equation}\label{wl}
W(L)=\{g\in \widehat\Z^*\mid g(x)=x, \ \forall x\in L\}
\end{equation}
We can now formulate Theorem \ref{mainintro} in a precise manner. 
\begin{thm}\label{main} For $L$ a finite abelian extension of $\Q$, let $\chi(L):\widehat\Z^*\to G:=\operatorname{Gal}(L/\Q)$ be the canonical continuous surjective group morphism. \newline
  $(i)$~The association $L \mapsto\gamma(L):=\left(\pi_\chi(L): X_{\Q}^\chi \to X_{\mathbb{Q}}\right)$ (see Definition \ref{coverdef}) defines a contravariant functor \[\text{Ab}(\Q) \stackrel{\gamma}{\longrightarrow} \text{Cov}_{\text{ab}}(X_\Q)
  \]
  from finite abelian extensions of $\Q$ to finite abelian covers of the space $X_\Q$. \newline
$(ii)$~The finite set $R$ of places of $\Q$ at which the cover $\gamma(L)$ ramifies is the union of the archimedean place with the set of primes at which $L$ ramifies.\newline
$(iii)$~Let $p \notin R$ then the monodromy of $C_p$ in $\gamma(L)$ is the element of $G$ given by the arithmetic Frobenius $\mathrm{Frob}_p$.\newline
$(iv)$~The connected components of the inverse image of $C_p$ in $\gamma(L)$ are covering circles labeled by the places of $L$ over the prime $p$. 	
 \end{thm}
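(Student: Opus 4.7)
The plan is to assemble Theorem \ref{mappingtorus1} and Corollary \ref{ramicor} of Sections 2.1--2.3 with the reciprocity law of Fact \ref{artinrec}, so that each of (i)--(iv) becomes a short translation. For each finite abelian extension $L/\Q$, write $G(L) := \mathrm{Gal}(L/\Q)$ and $\chi_L := \chi(L) : \widehat\Z^* \twoheadrightarrow G(L)$. For (i), given a further inclusion $L \subseteq L'$, the inclusion $W(L') \subseteq W(L)$ coming from \eqref{wl} induces a surjection $G(L') \twoheadrightarrow G(L)$ through which $\chi_L$ factors as this surjection composed with $\chi_{L'}$. Applying $Y_\Q \times_{\widehat\Z^*} (-)$ then produces a canonical map $\gamma(L') \to \gamma(L)$ over $X_\Q$. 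The standard functoriality of the fibered product makes $\gamma$ contravariant.

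For (ii), Corollary \ref{ramicor} already identifies the smallest ramification set as $C(\chi_L) \cup \{\infty\}$, so it suffices to identify $C(\chi_L)$ with the set of rational primes that ramify in $L/\Q$. The discussion preceding Proposition \ref{ramiprop} describes $C(\chi_L)$ as the set of prime divisors of the conductor of $\chi_L$, i.e.\ of the smallest $m$ such that $\chi_L$ factors through $\widehat\Z^* \to (\Z/m\Z)^*$. Via the isomorphism $\theta: \widehat\Z^* \to \mathrm{Gal}(\Q^{\mathrm{ab}}/\Q)$, this is the smallest $m$ with $L \subseteq \Q(\zeta_m)$, which by the conductor-discriminant formula is divisible by exactly the primes ramifying in $L/\Q$.

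For (iii) and (iv), fix $p \notin R$. Theorem \ref{mappingtorus1}(iii) identifies the monodromy of $C_p$ in $\pi_{\chi_L}^{-1}(C_p)$ with $\chi_L(p) \in G(L)$, and Fact \ref{artinrec} matches $\chi_L(p)$ with $\mathrm{Frob}_p$, which gives (iii). Combined with Theorem \ref{mappingtorus1}(ii), $\pi_{\chi_L}^{-1}(C_p)$ is the mapping torus of multiplication by $\mathrm{Frob}_p$ on the finite set $G(L)$. Its connected components thus correspond to the $\langle \mathrm{Frob}_p\rangle$-orbits in $G(L)$, i.e.\ to the cosets $G(L)/\langle \mathrm{Frob}_p\rangle$, each closing up into a circle of length $d \log p$ where $d = |\langle \mathrm{Frob}_p\rangle|$. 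Since $p$ is unramified in $L$ and $L/\Q$ is abelian, the decomposition group at any prime of $\mathcal{O}_L$ above $p$ equals $\langle \mathrm{Frob}_p\rangle$, independently of the chosen prime, and the set of places of $L$ above $p$ is canonically $G(L)/\langle \mathrm{Frob}_p\rangle$; this produces the labeling in (iv).

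The substantive work is already carried out in Theorem \ref{mappingtorus1} and Corollary \ref{ramicor}. What remains are essentially dictionary entries: matching the \emph{analytic} conductor $C(\chi_L)$ (defined via continuity on $\widehat\Z^*$) with the arithmetic conductor of $L/\Q$, and identifying $G(L)/\langle \mathrm{Frob}_p\rangle$ with the places of $L$ above $p$ in the unramified abelian case. Both are standard facts of global class field theory for $\Q$. The one point requiring mild care is naturality in (i) for a chain $L \subseteq L' \subseteq L''$, but this reduces to the compatibility of the restriction maps $G(L'') \twoheadrightarrow G(L') \twoheadrightarrow G(L)$ with the fibered product construction, so I do not anticipate a serious obstacle.
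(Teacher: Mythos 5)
Your treatment of (ii)--(iv) is correct and essentially the paper's own argument: (iii) and (iv) are exactly the paper's reduction to Theorem \ref{mappingtorus1} plus Fact \ref{artinrec} and the identification of the places above $p$ with $G/\langle \mathrm{Frob}_p\rangle = G/D_p$; for (ii) the paper invokes the local--global statement that $p$ ramifies in $L$ iff $\chi_L(\Z_p^*)\neq\{1\}$, whereas you route through the conductor (the relevant standard fact is the conductor--ramification theorem rather than the conductor--discriminant formula, but the conclusion $C(\chi_L)=\{p : p \text{ ramifies in } L\}$ is the same).

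The gap is in (i). You define $\gamma$ only on inclusions $L\subseteq L'$, i.e.\ on the poset of subfields of $\Q^{\mathrm{ab}}$. But the category $\text{Ab}(\Q)$ has more morphisms: an arbitrary field homomorphism $\sigma: L_1\to L_2$ between finite abelian extensions need not be the canonical inclusion --- it differs from it by an element of $\mathrm{Gal}(L_1/\Q)$. The paper's proof of (i) is devoted almost entirely to this point: it writes each $\sigma$ uniquely as $\sigma=\iota\circ k$ with $\iota$ the inclusion and $k\in G_1$, sets $\rho_\sigma(g):=r(g)k$ (where $r:G_2\to G_1$ is Galois restriction), checks that $\rho_\sigma$ satisfies the equivariance condition $\chi_1(w)\rho_\sigma(g)=\rho_\sigma(\chi_2(w)g)$ needed for $\tilde\rho_\sigma$ to descend to $X_\Q^{\chi_2}\to X_\Q^{\chi_1}$, and then verifies $\rho_{\sigma'\circ\sigma}=\rho_\sigma\circ\rho_{\sigma'}$ using $h\circ\iota=\iota\circ r(h)$. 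For inclusions alone this is indeed as trivial as you say, but the composition law for general morphisms mixes the restriction maps with the twisting elements $k, k'$ and is the one computation in (i) that actually has to be done. You should either restrict the claimed source category to inclusions or supply this decomposition and the accompanying cocycle-style verification.
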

\proof $(i)$~To define the functor $\gamma$ on morphisms, we utilize a specific, uniquely determined decomposition of each morphism as a composition of the canonical inclusion and a Galois automorphism. Let $L_j$, $j=1,2$, be finite abelian extensions of $\Q$, and $\chi_j:\widehat \Z^*\to G_j$ the canonical continuous surjective group morphisms. Let $r:G_2\to G_1$ be the restriction morphism of Galois groups. Next, we plan to  combine the group morphism $r$ with the action of $G_1$ on $\gamma(L_1)$. This leads to consider maps   $\rho:G_2\to G_1$  which fulfill the condition 
\begin{equation}\label{cov}
\chi_1(w)\rho(g)=\rho(\chi_2(w)g), \ \forall w\in \widehat \Z^*, g\in G_2
\end{equation}
To any such map one associates the map $\tilde \rho:X_\Q^{\chi_2}\to X_\Q^{\chi_1}$ defined by
$$
\tilde \rho(z,g)=(z,\rho(g)), \ \forall z\in X_\Q^{ab}, g\in G_2.
$$
We now define the canonical decomposition of morphisms $\sigma:L_1\to L_2$. One can write uniquely $\sigma= \iota\circ k$ where $\iota$ is the canonical  inclusion $\iota :L_1\subset L_2$ and $k\in G_1$. We then define
$$
\rho_\sigma(g):=r(g)k, \ \forall g\in G_2.
$$
One has $r( \chi_2(w))=\chi_1(w)$ for all $w\in \widehat \Z^*$, and this shows that  $\rho_\sigma$ fulfills \eqref{cov}. 

To prove $(i)$, it remains to show that given $\sigma:L_1\to L_2$ and $\sigma':L_2\to L_3$, one has
$$
\rho_{\sigma'\circ \sigma}=\rho_\sigma\circ \rho_{\sigma'}
$$ 
Let $\sigma''=\sigma'\circ \sigma: L_1\to L_3$,  $\iota'':L_1\to L_3$ the canonical inclusion and $r'':G_3\to G_1$ the canonical group morphism (Galois restriction). One has $\iota''=\iota'\circ \iota$, $r''=r\circ r'$. Let now $k\in G_1$ with  $\sigma=\iota\circ k$, and $k'\in G_2$ with  $\sigma'=\iota'\circ k'$. Then one obtains
$$
\sigma''=\sigma'\circ \sigma=\iota'\circ k'\circ \iota\circ k=\iota'\circ \iota\circ r(k')\circ k=\iota''\circ k'', \ k''=r(k') k
$$
where the Galois restriction $r:G_2\to G_1$ fulfills $h\circ \iota=\iota\circ r(h)$ for any $h\in G_2$. By definition one has, for any $g\in G_3$, 
$$
\rho_{\sigma'\circ \sigma}(g)=r''(g)k''=r(r'(g))r(k') k=r(r'(g)k')k=\rho_\sigma(r'(g)k')=\rho_\sigma\circ \rho_{\sigma'}(g)
$$
and this ends the proof of the functoriality.

$(ii)$~The following statement is a `standard'  fact of the relation occurring between local and global class field theory. Let $p$ be a finite prime, then $p$ ramifies in $L$ if and only if  $\chi_L(\Z_p^*)\neq\{1\}$ where $\Z_p^*\subset \widehat \Z^*$ is the local ramification group. Thus  the finite set of $p$ at which $L$ ramifies coincides with $C(\chi)$ as defined in \eqref{cchi}. Then $(ii)$ follows from Corollary \ref{ramicor}.\newline
$(iii)$~Follows from Proposition \ref{mappingtorus1} $(iii)$, and the basic fact \ref{artinrec} showing that the arithmetic Frobenius $\mathrm{Frob}_p\in G$ coincides with $\chi(p)\in G$.\newline
$(iv)$~By $(iii)$ and $(ii)$ of Proposition \ref{mappingtorus1}, the inverse image of $C_p$ is isomorphic to the mapping torus of the multiplication by  $\mathrm{Frob}_p\in G$. Thus $(iv)$ follows from the fact that $p$ being unramified in the extension,   $\mathrm{Frob}_p\in G$ generates the decomposition group $D_p$.\endproof 
\begin{rem}\label{archimedeanplace} One can also investigate the inverse image $\pi_\chi^{-1}(C_\infty)$ where $C_\infty=\{c\}$ and $c$ is the element of $X_\Q$  represented by an adele $a$ such that $a_\infty=0$ and $a_v=1$ for any $v<\infty$. The elements of $\pi_\chi^{-1}(C_\infty)$ are  classes modulo $\widehat \Z^*$ of pairs $(z,g)\in Y_\Q\times G $ such that $\pi(z)=c$. This equality shows that the class of $z$ in $Y_\Q=\Q^\times\backslash \A_\Q$ contains an element $\tilde z\in \A_\Q$ with 
$$
\tilde z_\infty=0, \ \ \tilde z_v\in \Z_v^*, \ \forall v<\infty
$$
 This lift $\tilde z\in \A_\Q$ is uniquely determined up to sign 	since $-1$ is the only rational number $q$ with $\vert q\vert_v=1$ $\forall v<\infty$. Thus the fiber $F_c$ is the quotient $G/\chi(-1)$ of the group $G$ by the element $\chi(-1)$. This element of the Galois group is the complex conjugation, and thus the quotient $G/\chi(-1)$ is identical with the set of archimedean places together with the transitive  Galois action of $G$.
\end{rem}

\section{Schemes and adelic spaces: a generalized class field connection}\label{sec3}
In this section, we establish a conceptual bridge between the adelic spaces arising from our extension of class field theory and the geometric spaces produced by Grothendieck’s extension of Galois theory. This link generalizes the classical correspondence between Galois groups and adelic groups, offering a unified perspective on field extensions.

We denote by $\Z_{(p)}$ the local ring describing the stalk of the structure sheaf of $\Spec\,\Z$ at the point $p$ of the spectrum, and by
$$
r:\Z_{(p)}\to \F_p
$$
the natural residue ring homomorphism.
In this section we  determine the abelianized fundamental group $\pi_1^{et}(\Spec\,\Z_{(p)})^{ab}$,
 identify this group with the symmetry group of the fiber $\pi^{-1}(C_p)\subset Y_\Q$ 
over the periodic orbit $C_p$ and show that the monodromy  of $C_p$ in $\pi^{-1}(C_p)\subset Y_\Q$
is the image of the canonical (Frobenius) generator under the map 
$$r^*:\pi_1^{e t}\left(\operatorname{Spec} \mathbb{F}_p\right)\to\pi_1^{e t}(\operatorname{Spec} \mathbb{Z}_{(p)})^{a b}$$ 
One knows that the \'etale fundamental group $\pi_1^{et}(\Spec\,\F_p)$ is isomorphic to the Galois group ${\rm Gal}(\bar \F_p/\F_p)$ of the algebraic closure $\bar \F_p$ of $\F_p$. The latter is an abelian  profinite group isomorphic to $\widehat\Z$: the isomorphism maps the  Frobenius automorphism acting on $\bar\F_p$ as  $x\mapsto x^p$, to $1\in \widehat\Z$.

Let $ A = \mathbb{Z}_{(p)}$, and let $A^{\mathrm{sh}}$ denote the strict henselization of $A$. This is the integral closure of $A$ in the $p$-typical Witt ring $ W(\overline{\mathbb{F}}_p)$ which is a complete discrete valuation ring. Its field of fractions is the completion of  $\mathbb{Q}_p^{\mathrm{ur}} := \mathbb{Q}_p(\{\zeta_\ell\}_{(\ell, p)=1}) $. 

Let $K$ denote the field of fractions of $A^{\mathrm{sh}}$. The following field inclusions hold:  
\[
\mathbb{Q}(\{\zeta_\ell\}_{(\ell, p)=1}) \subset K \subset \mathbb{Q}_p^{\mathrm{un}}.
\]  
The etale fundamental group $\pi_1^{\text{et}}(\operatorname{Spec}\,\mathbb{Z}_{(p)})$ is known to fit into the exact sequence (see \cite{Stacks}, Lemma 58.11.4): 
\[
\pi_1^{\text{et}}(\operatorname{Spec}\,K) \longrightarrow \pi_1^{\text{et}}(\operatorname{Spec}(\mathbb{Q})) \longrightarrow \pi_1^{\text{et}}(\operatorname{Spec}(\mathbb{Z}_{(p)})) \longrightarrow 1.
\]  
Upon abelianization, this sequence becomes:  
\begin{equation}\label{eqa}
\pi_1^{\text{et}}(\operatorname{Spec}\,K)^{\mathrm{ab}} \stackrel{\alpha}{\longrightarrow} \pi_1^{\text{et}}(\operatorname{Spec}(\mathbb{Q}))^{\mathrm{ab}} \longrightarrow \pi_1^{\text{et}}(\operatorname{Spec}(\mathbb{Z}_{(p)}))^{\mathrm{ab}} \longrightarrow 1.
\end{equation} 
From this, the following result is derived:

\begin{prop}\label{lem3.1}  Let  $p$ be a rational prime. Let $\left\{{\rm F r o b}_{p}\right\}\in \pi_1^{e t}(\Spec(\F_p))$ be the canonical generator.
\begin{enumerate}
\item[(i)] The abelianized \'etale fundamental group $\pi_1^{et}(\Spec\,\Z_{(p)})^{ab}$ is isomorphic to the product $\prod_{v\neq p}\Z_v^*$.

\item[(ii)] The image $r^*(\Fr_p)\in \pi_1^{et}(\Spec\,\Z_{(p)})^{ab}$ is given by the element $p$ diagonally embedded in the product $\prod_{v\neq p}\Z_v^*$.
\end{enumerate}
\end{prop}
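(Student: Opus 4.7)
My plan is to analyze the abelianized exact sequence (\ref{eqa}) by combining the Kronecker--Weber theorem with an explicit identification of the image of $\alpha$ as the local inertia factor at $p$.

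First I would apply Kronecker--Weber to identify $\pi_1^{et}(\Spec\,\Q)^{\mathrm{ab}}\cong \operatorname{Gal}(\Q^{\mathrm{ab}}/\Q)\cong \widehat\Z^* = \prod_q \Z_q^*$, where a tuple $(g_q)$ acts on $\zeta_n$ (with $n=q^k$) as $\zeta_n\mapsto \zeta_n^{g_q}$. Since the sequence (\ref{eqa}) is the abelianization of a right-exact sequence of profinite groups, right-exactness gives $\pi_1^{et}(\Spec\,\Z_{(p)})^{\mathrm{ab}} = \mathrm{coker}(\alpha)$, so the problem reduces to identifying $\mathrm{Im}(\alpha)\subset\widehat\Z^*$.

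Next I would compute $\mathrm{Im}(\alpha)$. Since $K$ is the fraction field of the strict henselization $A^{\mathrm{sh}}$, the absolute Galois group $\operatorname{Gal}(\bar\Q/K)$ is, after choosing a compatible geometric point, the inertia subgroup $I_p$ at $p$ in $\operatorname{Gal}(\bar\Q/\Q)$. To compute the image of $I_p$ in $\widehat\Z^*$: for $n=p^k m$ with $(m,p)=1$ the natural decomposition $(\Z/n\Z)^*\cong (\Z/p^k\Z)^*\times (\Z/m\Z)^*$ reflects the fact that $\Q(\zeta_m)$ is unramified at $p$ while $\Q(\zeta_{p^k})$ is totally ramified at $p$; thus inertia at $p$ in $\operatorname{Gal}(\Q(\zeta_n)/\Q)$ is exactly the $(\Z/p^k\Z)^*$ factor. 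Passing to the projective limit over $n$ yields $\mathrm{Im}(\alpha)= \Z_p^*\subset \widehat\Z^*$. Hence $\pi_1^{et}(\Spec\,\Z_{(p)})^{\mathrm{ab}}\cong \widehat\Z^*/\Z_p^*\cong \prod_{v\neq p}\Z_v^*$, proving $(i)$.

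For $(ii)$, I would invoke Fact~\ref{artinrec}: the arithmetic Frobenius $\mathrm{Frob}_p$ acts on $\zeta_n$ with $(n,p)=1$ as $\zeta_n\mapsto \zeta_n^p$. Under the identification $\prod_{v\neq p}\Z_v^* = \varprojlim_{(n,p)=1}(\Z/n\Z)^*$ this element corresponds to the tuple whose reduction modulo every such $n$ is $p$, namely the diagonal embedding of $p$. Functoriality of $\pi_1^{et}$ applied to the residue map $r:\Z_{(p)}\to\F_p$ carries the canonical generator $\mathrm{Frob}_p\in\pi_1^{et}(\Spec\,\F_p)\cong \widehat\Z$ to precisely this element. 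The principal obstacle is the identification of $\mathrm{Im}(\alpha)$ with $\Z_p^*$: all else is a direct application of Kronecker--Weber and of the local-global compatibility built into the Artin reciprocity map.
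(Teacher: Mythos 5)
Your proposal is correct and follows essentially the same route as the paper: both reduce, via the abelianized right-exact sequence \eqref{eqa}, to showing that the image of $\alpha$ in $\widehat\Z^*=\operatorname{Gal}(\Q^{\mathrm{ab}}/\Q)$ is exactly the factor $\Z_p^*$, and both deduce (ii) from the action of $\mathrm{Frob}_p$ on prime-to-$p$ roots of unity. The only cosmetic difference is that you identify $\operatorname{Gal}(\bar\Q/K)$ with the inertia subgroup at $p$ and invoke the ramification behaviour of $\Q(\zeta_{p^k})$ versus $\Q(\zeta_m)$, whereas the paper argues directly from the sandwich $\Q(\{\zeta_\ell\}_{(\ell,p)=1})\subset K\subset \Q_p^{\mathrm{un}}$ — these are the same computation in two dialects.
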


\begin{proof}
By the Kronecker-Weber theorem, the maximal abelian extension of $\mathbb{Q}$ is the cyclotomic field $ \mathbb{Q}^{\mathrm{ab}}$. The group of roots of unity $\Q/\Z$ is Pontrjagin  dual to the compact additive group $\widehat \Z$. The Galois group $ \operatorname{Gal}(\mathbb{Q}^{\mathrm{ab}}/ \mathbb{Q}) = \widehat{\mathbb{Z}}^*$ acts on  roots of unity by the dual of the action of $\widehat{\mathbb{Z}}^*$ on $\widehat{\mathbb{Z}}$ by multiplication.

Let $K^{\mathrm{ab}}$ denote the maximal abelian extension of $ K $. By construction one has  
\[
\pi_1^{\text{et}}(\operatorname{Spec}(K))^{\mathrm{ab}} = \operatorname{Gal}(K^{\mathrm{ab}} / K).
\]
Let us simplify the notation by setting $ G = \operatorname{Gal}(K^{ab}/K) $. To determine its image in $ \operatorname{Gal}(\mathbb{Q}^{ab}/\mathbb{Q}) $, it suffices to analyze the action of $G$ on roots of unity. Since $\mathbb{Q}(\{\zeta_\ell\}_{(\ell, p)=1}) \subset K$, $G$ fixes all roots of unity of orders coprime to $p$.
Furthermore, as $K \subset \mathbb{Q}_p^{\text{un}}$ and adjoining a primitive $p^n$-th root of unity to $ \mathbb{Q}_p^{\text{un}}$ produces an abelian Galois extension of degree $(p-1)p^{n-1}$ with Galois group $ (\mathbb{Z}/p^n\mathbb{Z})^* $, the same property holds for $K$. 
This relationship can be visualized using the following diagram of towers of abelian field extensions, where vertical bars represent inclusions.
\[
   \begin{tikzcd}
       K(\{\zeta_{p^\infty}\})\arrow[r,symbol=\subset]\arrow[d,dash,"{\Z_p^*}"'] & \Q_p^{un}(\{\zeta_{p^\infty}\})\arrow[d,dash]\arrow[d, dash,  "{\Z_p^*}"]\\
K\arrow[r,symbol=\subset]\arrow[d,dash] & \Q_p^{un}\arrow[d,dash]\\
       \mathbb{Q}(\{\zeta_\ell\}_{(\ell, p)=1})& \Q_p
   \end{tikzcd}
   \]
In particular, from the inclusion $K(\{\zeta_{p^\infty}\})\subset K^{ab}$, one obtains that  $G$ surjects onto the Galois group $ \mathbb{Z}_p^*$ of the extension of $ K$ obtained by adjoining all roots of unity of $p$-power. Furthermore, by construction $\Q^{ab}\subset K(\{\zeta_{p^\infty}\})$ 
and this ensures that the image of $G$ under the map $\alpha$ in \eqref{eqa} is precisely the $ \mathbb{Z}_p^* $ factor within $ \prod_v \mathbb{Z}_v^* = \operatorname{Gal}(\mathbb{Q}^{ab}/\mathbb{Q}) $. The isomorphism  $\pi_1^{et}(\Spec\,\Z_{(p)})^{ab}\simeq \prod_{v\neq p}\Z_v^*$ then follows from the exactness of  \eqref{eqa}.\newline
(ii)~The action of $\text{Frob}_{p}$ on roots of unity is given by raising to the power $p$.
\end{proof}

With the notations as in section~\ref{sect2.1}, we can now match the (number-theoretic) results related to the abelianized \'etale fundamental group of the  scheme $\Spec\,\Z_{(p)}$ with the geometry inherent to the adelic cover $Y_\Q \stackrel{\pi}{\longrightarrow} X_\Q$.

\begin{thm}\label{main2}
	 Let  $p$ be a rational prime. Let $\left\{{\rm F r o b}_{p}\right\}\in \pi_1^{e t}(\Spec(\F_p))$ be the canonical generator. 
     \begin{enumerate}
         \item[(i)] The inverse image $\pi^{-1}(C_p)\subset Y_\Q$ of the periodic orbit $C_p$  is canonically isomorphic to the mapping torus of the multiplication by $r^*\left\{{\rm F r o b}_{p}\right\}$  in the abelianized \'etale fundamental group $\pi_1^{e t}(\Spec \, \Z_{(p)})^{ab}$.
          \item[(ii)] The canonical isomorphism in $(\rm i)$ is equivariant for the action of the idele class group.
         \item[(iii)] The monodromy of the periodic orbit $C_p$ in $\pi^{-1}(C_p)\subset Y_\Q$ is equal to the natural map $r^*:\pi_1^{e t}\left(\operatorname{Spec} \mathbb{F}_p\right) \rightarrow \pi_1^{e t}(\operatorname{Spec} \mathbb{Z}_{(p)})^{a b}$, and determines the linking of the prime $p$ with all other primes.
         \end{enumerate}
     \end{thm}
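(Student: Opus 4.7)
The plan is to combine the geometric description of $\pi^{-1}(C_p)$ obtained in Proposition~\ref{mappingtorus} with the number-theoretic identification of $\pi_1^{et}(\Spec\,\Z_{(p)})^{ab}$ furnished by Proposition~\ref{lem3.1}. Concretely, Proposition~\ref{mappingtorus}(i) supplies the canonical isomorphism
\[
\tilde \eta: p^{\Z}\backslash \left(H\times \R_+^*\right)\xrightarrow{\ \sim\ } \pi^{-1}(C_p), \qquad H=\prod_{v\neq p}\Z_v^*,
\]
realizing $\pi^{-1}(C_p)$ as the mapping torus of the compact group $H$ under multiplication by $p\in H$. On the number-theoretic side, Proposition~\ref{lem3.1}(i) gives a canonical isomorphism $\varphi:H\xrightarrow{\sim}\pi_1^{et}(\Spec\,\Z_{(p)})^{ab}$, and part~(ii) of the same proposition identifies the diagonal element $p\in H=\prod_{v\neq p}\Z_v^*$ with $r^*\{\mathrm{Frob}_p\}$. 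Transporting $\tilde\eta$ along $\varphi$ therefore yields the desired canonical identification of $\pi^{-1}(C_p)$ with the mapping torus of multiplication by $r^*\{\mathrm{Frob}_p\}$ on $\pi_1^{et}(\Spec\,\Z_{(p)})^{ab}$, proving~(i).

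For~(ii), the action of the idele class group $C_\Q$ on $\pi^{-1}(C_p)$ has been expressed through its lift ${\widehat\Z}^*\times \R_+^*$ acting componentwise on $H\times \R_+^*$ in Proposition~\ref{mappingtorus}(ii). Since the isomorphism $\varphi$ of Proposition~\ref{lem3.1} is induced by class field theory and is by construction ${\widehat\Z}^*$\nobreakdash-equivariant (the action of $\widehat\Z^*$ on $H=\prod_{v\neq p}\Z_v^*$ being projection followed by multiplication, and the action on $\pi_1^{et}(\Spec\,\Z_{(p)})^{ab}$ being the composition of $\widehat\Z^*\to\operatorname{Gal}(\Q^{ab}/\Q)$ with the surjection of~\eqref{eqa}), the transported isomorphism retains equivariance under $C_\Q$. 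The $\R_+^*$ factor is unchanged, so the full $C_\Q$\nobreakdash-equivariance follows.

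For~(iii), the monodromy of $C_p$ inside $\pi^{-1}(C_p)$ is intrinsic to the mapping torus structure: under $\tilde\eta$ it is multiplication by $p\in H$, and under $\varphi$ this becomes the element $r^*\{\mathrm{Frob}_p\}\in \pi_1^{et}(\Spec\,\Z_{(p)})^{ab}$, exactly the image of the canonical generator under $r^*:\pi_1^{et}(\Spec\,\F_p)\to \pi_1^{et}(\Spec\,\Z_{(p)})^{ab}$. To deduce the linking assertion, I would invoke Proposition~\ref{lem3.1}(ii) again: the $q$\nobreakdash-th component (for $q\neq p$) of $r^*\{\mathrm{Frob}_p\}\in \prod_{v\neq p}\Z_v^*$ is $p\in \Z_q^*$, and by the Mumford--Mazur dictionary recalled in the introduction, this is precisely the linking number of $p$ with $q$; the non-triviality for every $q\neq p$ follows as in Proposition~\ref{mappingtorus}(iii) from the injectivity of $\Z\ni n\mapsto p^n\in\Z_q^*$, guaranteed by the embedding $\Q\hookrightarrow\Q_q$.

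The only point requiring mild care is the compatibility of the two canonical identifications: one must check that the isomorphism $\varphi$ of Proposition~\ref{lem3.1}, built from the Kronecker--Weber theorem and the exact sequence~\eqref{eqa}, sends the distinguished element $p\in H$ arising from the mapping torus construction of Proposition~\ref{mappingtorus} to $r^*\{\mathrm{Frob}_p\}$. This is the content of Proposition~\ref{lem3.1}(ii) and amounts to the fact that $\mathrm{Frob}_p$ acts on roots of unity of order prime to $p$ by raising to the $p$-th power, matching the Artin reciprocity convention used to identify $\widehat\Z^*$ with $\operatorname{Gal}(\Q^{ab}/\Q)$. Once this compatibility is noted, the theorem is an immediate concatenation of the two structural results already proven in Sections~\ref{sect2.1} and~\ref{sec3}.
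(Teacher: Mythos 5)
Your proposal is correct and follows exactly the paper's own route: the paper's proof consists precisely of concatenating Proposition~\ref{mappingtorus} with Proposition~\ref{lem3.1}~(i) and (ii), which is what you do. Your version merely spells out in more detail the transport of structure along the isomorphism $H\simeq\pi_1^{et}(\Spec\,\Z_{(p)})^{ab}$ and the equivariance check, which the paper leaves implicit.
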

     \begin{proof} The proof of this theorem follows from Proposition~\ref{mappingtorus} and the isomorphism given by Proposition~\ref{lem3.1} (i)  $\pi_1^{et}(\Spec\,\Z_{(p)})^{ab}\simeq \prod_{v\neq p}\Z_v^*$ together with the statement (ii).
     \end{proof}

Theorem \ref{main2} elucidates the geometric aspects of the well-known analogy between primes and knots through  the role of the adelic space $X_\Q$, its periodic orbits and their liftings to the adele class space of the rationals $Y_\Q$ viewed as maximal  abelian cover of $X_\Q$.

     The following table summarize what we have discussed so far and in the introduction.

\begin{center}
\renewcommand{\arraystretch}{0.5}
 \begin{tabular}{c|c}
%\hline 
& \\ \textbf{Etale} & \textbf{Adelic} \\
& \\ 
\hline 
& \\$\pi_1^{et}(\operatorname{Spec}\Q)^{ab}={\rm Gal}(\Q^{ab}/\Q)$  & $\left({\rm GL}_1(\A_\Q)/\Q^\times\right)/\left({\rm GL}_1(\A_\Q)/\Q^\times\right)_0$ \\
& \\ 
%\hline 
& \\ Local  Weil group  & $\Q_v^*=$  isotropy  in $Y_\Q=\Q^\times\backslash\A_\Q$  \\
& \\
%\hline 
& \\ $r^*: \operatorname{Spec} \mathbb{F}_p \hookrightarrow \operatorname{Spec} \mathbb{Z}$ & Periodic orbit $C_p$ in $X_\Q$ \\ & \\
%\hline 
& \\ $\pi_1^{e t}\left(\operatorname{Spec} \mathbb{F}_p\right)$ &  $\pi_1(C_p)$~ topological\\ & \\ 
%\hline 
& \\ $\pi_1^{e t}(\operatorname{Spec} \mathbb{Z}_{(p)})^{a b}\simeq \prod_{q\neq p} \mathbb{Z}_q^*$ & $\operatorname{Aut}(\pi^{-1}(C_p))=\prod_{q\neq p} \mathbb{Z}_q^*$  \\
 & \\ 
 %\hline 
 & \\ $\pi_1^{e t}\left(\operatorname{Spec} \mathbb{F}_p\right)\to\pi_1^{e t}(\operatorname{Spec} \mathbb{Z}_{(p)})^{a b}$ & Monodromy of $C_p$ in $\pi^{-1}(C_p)\subset Y_\Q$\\
& \\
%\hline 
& \\ $\overline{\operatorname{Spec}\Z}$ ~extended to $\overline{\mathbb S}$  & ${\mathcal X}_\Q=$ first 2 layers of stratification of $X_\Q$\\
& \\
%\hline 
& \\ Generic point $\eta\in \operatorname{Spec}\,\Z$ & Generic orbit $\eta\subset X_\Q$ \\
& \\
%\hline 
\end{tabular}\end{center}
\vspace{0.3in}

\section{The  semilocal description of $X_\Q$ in NCG}\label{sect4}

The space $ X_\Q $, when viewed through the lens of functional analysis, is best understood by focusing on a finite subset of places at a time. Specifically, this involves considering a finite subset $ S \subset \Sigma_\Q $ of places of $ \Q $, which includes the archimedean place $ \infty $. This approach stems from the fact that the function space associated with $ X_\Q $ is expressed as a cross product and comprises functions defined on the adeles and more specifically belonging to the Bruhat–Schwartz space characterized by their non-trivial dependence on only finitely many places. The Bruhat–Schwartz functions $f$ are finite linear combinations of the products $f=\otimes_v f_v$ over  places $v$ of $\Q$, where each $f_v$ is a Bruhat–Schwartz function on the local field $\Q_v$ and $f_v=\mathbf{1}_{\mathcal{O}_v}$ is the characteristic function on the ring of integers $\mathcal{O}_v$ for all but finitely many $v$. Except for a finite set $S$ of places, the functions reduce to the characteristic functions of the maximal compact subring of the restricted infinite product of local fields $ \prod^{\prime}_{v \notin S} \Q_v $. Consequently, the functional analysis focuses on the locally compact rings  
\[
\A_S = \prod_{v \in S} \Q_v.
\]

Moreover, the group involved in the cross product, which globally corresponds to $ \Q^\times $, naturally restricts to a subgroup $ \Gamma := \Z_S^\times \subset \Q^\times $ that depends on the finite set $ S $. After presenting the semilocal setup in section \ref{semilocprep}, we show in section \ref{sheafsemiloc} that the semilocal algebras form a sheaf of algebras over $\Spec\Z$. We then describe in section \ref{sectstrati} the natural stratification of the semilocal spaces  $ Y_{\Q,S} $ and $ X_{\Q,S} $. In section \ref{KC*} we compute the $K$-theory of the $C^*$-algebra associated to the first two stratas.
\subsection{The semilocal setup}\label{semilocprep}
The ring $ \A_S $ includes $ \Q $ as a subring via the diagonal embedding. Let $ \Z_S $ denote the 
ring $\Z$ localized at $S \setminus \{\infty\}$. It is the subring of $ \Q $ consisting of rational numbers 
whose denominators involve only primes $ p \in S $. Explicitly,  
\[
\Z_S = \{ q \in \Q \mid |q|_v \leq 1 \,, \ \forall v \notin S \}.
\]  
The group $ \Gamma_S$, of the invertible elements of $ \Z_S $, can be expressed as  
\[
\Gamma_S =  \{ \pm p_1^{n_1} \cdots p_k^{n_k} \mid p_j \in S \setminus \{\infty\}, \ n_j \in \Z \}.
\]  
This multiplicative group is generated by $ \pm 1 $ and the primes in $ S $. Structurally, it is the direct product of a copy of $ \Z $ for each prime in $ S $ and the group $ \Z/2\Z $ corresponding to the sign $ \pm 1 $. Conceptually, $ \Gamma_S $ is the group $\mathbf{G}_m(S^c)$ of sections of the sheaf $\mathbf{G}_m$  on the open set $S^c\subset \Spec \Z$, complement of $S$.

The semilocal version $ Y_{\Q,S} $ of the adèle class space is defined as the quotient 
\[
Y_{\Q,S} := \Gamma_S \backslash \A_S,
\]  
while the semilocal version of $ X_\Q $ is given by  
\[
X_{\Q,S} := \Gamma_S \backslash \A_S / \widehat{\Z}^*(S), \quad \widehat{\Z}^*(S) = \prod_{v \in S \setminus \{\infty\}} \Z_v^*.
\]

All the components of the global setting, including $ \Q $, $ \Q^\times $, the adeles $ \A_\Q $, the compact ring $ \widehat{\Z} $, its unit group $ \widehat{\Z}^* $, the idele class group $ C_\Q := \mathrm{GL}_1(\A_\Q)/\Q^\times $, and the double quotient $ X_\Q = \Q^\times \backslash \A_\Q / \widehat{\Z}^* $, admit a natural semilocal counterpart. These semilocal analogs are summarized in the following table:

\medskip
\begin{center}
\renewcommand{\arraystretch}{0.5}
 \begin{tabular}{c|c}
%\hline 
& \\ \textbf{Global} & \textbf{Semilocal} \\
& \\ 
\hline
\\
 $ \Q$  &  $\Z_S=\{q\in \Q\,:\, |q|_v\leq 1\,,\ \forall v\notin S\}$ \\
  &\\
  %\hline
  & \\
 $\Q^\times$ &     $\Z_S^\times=\{ \pm p_1^{n_1} \cdots p_k^{n_k} \, :\,  p_j
\in S \setminus\{ \infty \} \,,\, n_j\in \Z\}$\\ & \\
   % \hline 
   & \\
$\A_\Q$ &  $\A_{S}=\prod_{v\in S} \Q_v$ \\  & \\
  %\hline  
  & \\
     $\widehat \Z$ &     $ \widehat \Z(S)=\prod_{v\in S\setminus\{\infty\}}\Z_v$\\ & \\
    %\hline
    & \\
     $\widehat \Z^*$ &     $ \widehat \Z^*(S)=\prod_{v\in S\setminus\{\infty\}}\Z_v^*$\\ & \\
   %\hline
    & \\
     $C_\Q$ &     $C_{\Q,S}:={\rm GL}_1(\A_S) /\Z_S^\times$\\ & \\
     % \hline
    & \\
     $X_\Q$ &     $X_{\Q,S}:= \Z_S^\times\backslash \A_{S}/ \widehat \Z^*(S)$\\ & \\
   % \hline
     \end{tabular}
\bigskip
\end{center}
\subsection{Sheaf on $\Spec\Z$ of semilocal algebras}\label{sheafsemiloc}
 		 We shall first characterize Bruhat-Schwartz functions of the form $f=1_{
         \Z_v}\otimes g$ by the following properties which are "local" at $v$.
\begin{lem}\label{localv} Let $F$ be a finite set of places $F\subset\Sigma_\Q$, with $\infty\in F$. Let $v\in F$ be non-archimedean. Then a Bruhat-Schwartz function $f\in \cS(\A_F)$ is of the form $f=1_{\Z_v}\otimes g$ for some $g\in \cS(\A_{F\setminus\{v\}})$ if and only if it fulfills the following two properties
\begin{equation}\label{local1}
  a\in \A_F, \ \vert a_v\vert >1~ \Rightarrow ~f(a)=0  
\end{equation}
\begin{equation}\label{local2}
  f(a+ \alpha)=f(a), \ \forall \alpha \in \Q_v, \ \vert \alpha \vert \leq 1.  
\end{equation}    
\end{lem}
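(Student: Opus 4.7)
The plan is to verify both implications directly, treating the forward direction as a routine unpacking and devoting more care to the converse, where the two local conditions must be combined to reconstruct a tensor factorization of $f$ from its values on a single slice.

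For the forward direction, if $f=1_{\Z_v}\otimes g$, then $f(a)=1_{\Z_v}(a_v)\,g(a')$, where $a'$ denotes the projection of $a$ to $\A_{F\setminus\{v\}}$. Since $|a_v|>1$ forces $a_v\notin\Z_v$, condition \eqref{local1} is immediate. For \eqref{local2}, note that $|\alpha|\le 1$ means $\alpha\in\Z_v$, so $a_v+\alpha\in\Z_v$ exactly when $a_v\in\Z_v$, hence $1_{\Z_v}(a_v+\alpha)=1_{\Z_v}(a_v)$ and the value of $f$ is unchanged.

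For the converse, I would define $g:\A_{F\setminus\{v\}}\to\C$ by $g(b):=f(0,b)$, where $(0,b)$ denotes the adele in $\A_F$ with $v$-component zero and remaining components given by $b$. The identity $f=1_{\Z_v}\otimes g$ would then be checked pointwise at an arbitrary $(a_v,b)\in\A_F$. When $a_v\in\Z_v$, applying \eqref{local2} with $\alpha=-a_v$ (which satisfies $|\alpha|\le 1$) yields $f(a_v,b)=f(0,b)=g(b)=1_{\Z_v}(a_v)g(b)$. When $a_v\notin\Z_v$, so $|a_v|>1$, condition \eqref{local1} gives $f(a_v,b)=0=1_{\Z_v}(a_v)g(b)$. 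Since these two cases are exhaustive and complementary, the desired factorization holds.

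The remaining point, which is the only mild technical step, is to confirm that $g$ lies in $\cS(\A_{F\setminus\{v\}})$. For this I would invoke the tensor product description of Bruhat-Schwartz functions recalled in the Notations section: every $f\in\cS(\A_F)$ can be written as a finite sum $f=\sum_i f_v^{(i)}\otimes h^{(i)}$ with $f_v^{(i)}\in\cS(\Q_v)$ and $h^{(i)}\in\cS(\A_{F\setminus\{v\}})$. Evaluating at $a_v=0$ expresses $g(b)=\sum_i f_v^{(i)}(0)\,h^{(i)}(b)$ as a finite linear combination of Bruhat-Schwartz functions, hence $g\in\cS(\A_{F\setminus\{v\}})$. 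There is no serious obstacle in the argument; the only subtlety is precisely this check that restriction to the hyperplane $\{a_v=0\}$ sends $\cS(\A_F)$ into $\cS(\A_{F\setminus\{v\}})$, which is forced by the tensor factorization of the Bruhat-Schwartz space.
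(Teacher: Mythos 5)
Your proof is correct and follows essentially the same route as the paper: the forward direction by direct inspection, and the converse by defining $g(b):=f(0,b)$ and splitting into the cases $a_v\in\Z_v$ (where \eqref{local2} with $\alpha=-a_v$ applies) and $|a_v|>1$ (where \eqref{local1} applies). Your additional verification that $g\in\cS(\A_{F\setminus\{v\}})$ via the finite tensor decomposition is a detail the paper simply asserts, and it is a valid justification.
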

\proof Let $f=1_{\Z_v}\otimes g$, then \eqref{local1} holds, 
the second property \eqref{local2} holds since $\Z_v$ is a group under addition and its characteristic function is invariant under the group translations.\newline 
Conversely now, assume that a Bruhat-Schwartz function $f$ fulfills the above two properties. Let us consider the function $g(y):=f(0,y)$ for $y\in \A_{F\setminus\{v\}}$, it is a Bruhat-Schwartz function.  We show that $f=1_{\Z_v}\otimes g$. Let $(x,y)\in \Q_v\times \A_{F\setminus\{v\}}$. If $\vert x\vert >1$, then both $f$ and $1_{
\Z_v}\otimes g$ vanish (using for $f$ the first property \eqref{local1}). If $\vert x\vert \leq 1$, then by the second property \eqref{local2} one has 
$$f(x,y)=f(0,y)=g(y)=(1_{\Z_v}\otimes g)(x,y).$$
 The desired factorization  follows. \endproof 
 		The Zariski topology of  $\Spec\Z$ allows one to better understand the role of the semilocal Bruhat-Schwartz algebras and the transitions maps.
 		\begin{prop}\label{structure} For a finite set of places $S\subset\Sigma_\Q$, with $\infty\in S$, let  $\cS(\A_S)$ denote the Bruhat-Schwartz space of functions on the semilocal adeles $\A_S$. 
 		\begin{enumerate}
 			\item  For any pair of finite sets $\infty\in S \subset S'\subset \Sigma_\Q$, let $R_{S',S}$ be the maximal compact subring of $\prod_{v\in S'\setminus S} \Q_v$. Then the  natural maps 
            $$\gamma(S,S'):\cS(\A_S)\to \cS(\A_{S'})\quad  f\mapsto 1_{R_{S',S}}\otimes f$$  
            define a sheaf $\sheaf$ of algebras on  $\Spec\Z$.
 			\item The stalk of the sheaf $\sheaf$ at the generic point $\eta\in \Spec\Z$ is the  Bruhat-Schwartz algebra of functions on $\A_\Q$.
 			\item The global sections $\Gamma(\Spec\Z,\sheaf)$ form the Schwartz space $\cS(\R)=\cS(\A_{\{\infty\}}$.
 		\end{enumerate}
\end{prop}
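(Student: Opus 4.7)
The plan is to reformulate the proposition as a sheaf-theoretic descent statement on $\Spec\Z$ and reduce its content to Lemma \ref{localv}. Every nonempty open of $\Spec\Z$ has the form $U_S := \Spec\Z \setminus (S \setminus \{\infty\})$ for a unique finite $S \ni \infty$, with $U_{S_1} \cap U_{S_2} = U_{S_1 \cup S_2}$ and $U_S \supseteq U_{S'}$ iff $S \subseteq S'$. Setting $\sheaf(U_S) := \cS(\A_S)$ with restrictions $\gamma(S, S')$, a presheaf of commutative algebras is obtained at once: the element $1_{R_{S', S}} = \bigotimes_{v \in S' \setminus S} 1_{\Z_v}$ is idempotent for pointwise multiplication, so $\gamma(S, S')$ is an injective algebra morphism, and the tensor factorization $1_{R_{S'', S}} = 1_{R_{S'', S'}} \otimes 1_{R_{S', S}}$ yields the transitivity $\gamma(S', S'') \circ \gamma(S, S') = \gamma(S, S'')$.

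The nontrivial content is the sheaf axiom. A cover $U_S = \bigcup_i U_{S_i}$ translates to $S_i \supseteq S$ and $\bigcap_i S_i = S$, with pairwise intersections $U_{S_i \cup S_j}$; separatedness is immediate from the injectivity of $\gamma(S, S_i)$. For gluing, suppose $(f_i)$ satisfies the cocycle compatibility $\gamma(S_i, S_i \cup S_j)(f_i) = \gamma(S_j, S_i \cup S_j)(f_j)$. The key step I would prove is that each $f_i$ admits a factorization $f_i = 1_{R_{S_i, S}} \otimes g_i$ for some $g_i \in \cS(\A_S)$. Fix $i$ and $v \in S_i \setminus S$; since $\bigcap_k S_k = S$, one can find $j$ with $v \notin S_j$, and in $\cS(\A_{S_i \cup S_j})$ the function $\gamma(S_j, S_i \cup S_j)(f_j) = f_j \otimes \bigotimes_{w \in S_i \setminus S_j} 1_{\Z_w}$ carries an explicit tensor factor $1_{\Z_v}$. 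By the ``only if'' direction of Lemma \ref{localv}, this function satisfies \eqref{local1} and \eqref{local2} at $v$. The cocycle equality then forces the left-hand side $f_i \otimes \bigotimes_{w \in S_j \setminus S_i} 1_{\Z_w}$ to satisfy the same two conditions; as the additional tensor factors do not involve $a_v$, the conditions pass to $f_i$ itself, and the ``if'' direction of Lemma \ref{localv} extracts the factor $1_{\Z_v}$ from $f_i$. Iterating over $v \in S_i \setminus S$ produces $g_i \in \cS(\A_S)$ with $f_i = \gamma(S, S_i)(g_i)$, and comparing any two $g_i, g_j$ inside $\cS(\A_{S_i \cup S_j})$ via the cocycle condition plus injectivity of the restriction maps gives $g_i = g_j$, so they assemble into a single $g \in \cS(\A_S)$.

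Parts (2) and (3) are then formal. The generic point $\eta$ lies in every nonempty $U_S$, so $\sheaf_\eta$ is the filtered colimit $\varinjlim_S \cS(\A_S)$ along the $\gamma$-maps; this is precisely the restricted tensor product $\bigotimes'_v \cS(\Q_v) = \cS(\A_\Q)$ as recalled in the Notations section. For the global sections one takes $S = \{\infty\}$, for which $U_S = \Spec\Z$ and $\A_S = \R$, yielding $\Gamma(\Spec\Z, \sheaf) = \cS(\R)$. The main obstacle is the gluing step, where one must convert a tensor factor $1_{\Z_v}$ forced by compatibility only on a common refinement into an honest factorization of the original local section $f_i$; this conversion is exactly the content of the converse direction of Lemma \ref{localv}, without which the cocycle data would only produce a section on the refined open and not a descended section on $U_S$.
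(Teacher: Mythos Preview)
Your proof is correct and follows essentially the same approach as the paper: both reduce the sheaf condition to Lemma \ref{localv} by, for each place $v\in S_i\setminus S$, locating an index $j$ with $v\notin S_j$ and using the resulting $1_{\Z_v}$ factor to extract the factorization. The only difference is packaging: the paper first embeds $\sheaf$ as a subpresheaf of the constant sheaf $\cS(\A_\Q)$ and invokes the subsheaf criterion (\cite{MM}, II, Proposition~1), so that it works with a single $f\in\cS(\A_\Q)$ rather than a cocycle family $(f_i)$, which slightly streamlines the bookkeeping you carry out directly.
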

\begin{proof} $(1)$~A presheaf of sets on $\Spec\Z$ is a contravariant functor on the small category of  Zariski open subsets of $\Spec\Z$.  This is the category whose objects are  the empty set and  the complements $F^c$ of finite subsets $F\subset \Spec\Z$. Equivalently,  a presheaf of sets on $\Spec\Z$ can be seen as  a covariant functor from the category of finite sets of places $S\ni \infty$. By construction, the maps $\gamma(S,S')$ define morphisms of algebras  which respect the composition of inclusions. Thus $\sheaf$ is a presheaf of algebras. In order to test the sheaf condition  we first consider  the stalk at the generic point, \ie 
$$
\sheaf_\eta:= \varinjlim \cS(\A_S).
$$ 
 By construction this is the  global Bruhat-Schwartz space $\cS(\A_\Q)$ of functions on adeles. Thus, in particular, $(2)$ holds. The fact that the restriction maps $\gamma(S,S')$ are injective allows us to view  $\sheaf$ as a sub-presheaf of the constant sheaf associated to $\cS(\A_\Q)$.  Here we use  the connectedness of $\Spec\Z$ to state that the sections of the constant sheaf on any non-empty open subset coincide with $\cS(\A_\Q)$. To conclude that $\sheaf\subset \cS(\A_\Q)$ is a sheaf one applies the following fact 
\begin{prop*}[\cite{MM}, II, Proposition 1]
If $F$ is a sheaf on a topological space $X$, then a subfunctor $S \subset F$ is a subsheaf if and only if, for every open set $U$ and every element $f \in F(U)$, and every open covering $U=\bigcup U_i$, one has $f \in S(U)$ if and only if $\left.f\right|_{U_i} \in S(U_i)$ for all $i$.
\end{prop*}

Thus, to prove $(1)$ it suffices to show  that for any given  finite sets of places $S_j\ni \infty$, with $S=\cap S_j$,  and for any  $f\in \cS(\A_\Q)$, $f$ belongs to $\sheaf(S^c)$ when $f$ belongs to each $\sheaf(S_j^c)$.

Let $f\in \cS(\A_\Q)$, there exists a finite set $F\supset S$ of places such that $f\in \cS(\A_F)$. 
The maximal compact subring $R_{F\setminus S}$ of $\prod_{v\in F\setminus S} \Q_v$ is the product of the maximal compact subrings $R_v=\Z_v\subset \Q_v$, thus the characteristic function $1_{R_{F\setminus S}}$ is the tensor product 
$\otimes 1_{R_v}$. Therefore one is reduced  to show that for each $v \in F\setminus S$ the Bruhat-Schwartz function $f$ on  $\Q_v\times \A_{F\setminus\{v\}}$ is of the form $1_{R_v}\otimes g$, for some Bruhat-Schwartz function $g$ on $\A_{F\setminus\{v\}}$. Since $S=\cap S_j$ one has $v\notin S_j$ for some $j$ and thus since $f\in\sheaf(S_j^c)$, Lemma \ref{localv} gives the desired factorization. \newline
$(3)$~The global sections ${\Gamma(\Spec\Z,\sheaf)}$ correspond to the set $S=\{\infty\}$. 
\end{proof} 

The identification 
$ \Gamma_S =\mathbf{G}_m(S^c)$ 
supplies the conceptual meaning of the groups $ \Gamma_S $ as sections of the sheaf in groups $\mathbf{G}_m$  on  $\Spec\Z$. 

One can then easily transpose 
Proposition \ref{structure} to the algebraic cross products as follows 
\begin{thm}\label{structure1} For a finite set of places $\Sigma_\Q\supset S\ni \infty$, let $\cS(\A_S)\rtimes \Z_S^\times$ be the algebraic cross product of the Bruhat-Schwartz algebra $\cS(\A_S)$ by the action of $  \Z_S^\times$. 
 		\begin{enumerate}
 			\item The algebraic cross product $\sheaf\rtimes \mathbf{G}_m$ defines a sheaf  of algebras on  $\Spec\Z$ such that 
 			$$
 			\left(\sheaf\rtimes \mathbf{G}_m\right)(S^c)=\cS(\A_S)\rtimes \Z_S^\times.
 			$$
 			\item The stalk of the sheaf $\sheaf\rtimes \mathbf{G}_m$ at the generic point $\eta\in \Spec\Z$ is the global cross product $\cS(\A_\Q)\rtimes \Q^\times$. 			\item The global sections of $\sheaf\rtimes \mathbf{G}_m$ form the cross product $\cS(\R)\rtimes \{\pm 1\}$.
 		\end{enumerate}
\end{thm}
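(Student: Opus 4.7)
The plan is to promote Proposition~\ref{structure} to the crossed-product setting by constructing restriction maps $\cS(\A_S)\rtimes \Z_S^\times \to \cS(\A_{S'})\rtimes \Z_{S'}^\times$ for every inclusion $S\subset S'$ of finite sets of places containing $\infty$, verifying directly that the resulting presheaf is a sheaf of algebras on the distinguished basis of opens $\{S^c\}$ of $\Spec\Z$, and then reading off (2) and (3) as colimit, respectively initial-value, computations.

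First I would combine the presheaf inclusion $\gamma(S,S'):\cS(\A_S)\hookrightarrow \cS(\A_{S'})$, $f\mapsto 1_{R_{S',S}}\otimes f$, of Proposition~\ref{structure} with the tautological inclusion $\Z_S^\times\hookrightarrow \Z_{S'}^\times$ of units, which is how $\mathbf{G}_m$ restricts on $\Spec\Z$, and define $\sum_g f_g u_g \mapsto \sum_g (1_{R_{S',S}}\otimes f_g)\, u_g$. The essential algebraic compatibility is that for $g\in\Z_S^\times$ one has $|g|_v=1$ at every $v\notin S$, so multiplication by $g$ on $\prod_{v\in S'\setminus S}\Q_v$ preserves the maximal compact subring $R_{S',S}$. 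Consequently $g\cdot(1_{R_{S',S}}\otimes f)=1_{R_{S',S}}\otimes (g\cdot f)$, which is exactly the identity required for the restriction to respect the crossed-product relation $u_g f u_g^{-1}=g\cdot f$ and hence to be a morphism of algebras.

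Next I would verify the sheaf axiom, reducing to finite covers $\{U_i=S_i^c\}$ of $U=S^c$ with $S=\bigcap_i S_i$ by quasi-compactness of $\Spec\Z$. On the group side, $\mathbf{G}_m$ already satisfies $\Z_S^\times=\bigcap_i\Z_{S_i}^\times$, which is immediate from the characterization by absolute values. Using the grading $\cS(\A_T)\rtimes \Z_T^\times=\bigoplus_{g\in\Z_T^\times}\cS(\A_T)\, u_g$, I would analyze a matching family $\{\sigma_i=\sum_g f_{i,g} u_g\}$ one degree at a time. The overlap conditions together with the injectivity of $\gamma$ force $f_{i,g}=0$ whenever $g\notin\Z_S^\times$, because a nonzero summand $1_{R}\otimes f_{i,g}$ on the $U_i\cap U_j$ side would have no counterpart on the $U_j$ side whose group cannot contain $g$. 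For each $g\in\Z_S^\times$, the remaining collection $\{f_{i,g}\}_i$ is a matching family in $\sheaf$ and glues uniquely by Proposition~\ref{structure}, with finite support preserved because only finitely many degrees survive.

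Finally, for (2) the stalk at the generic point $\eta$ is the filtered colimit $\varinjlim_S \cS(\A_S)\rtimes\Z_S^\times$; since algebraic crossed products commute with filtered colimits, and $\varinjlim_S\Z_S^\times=\Q^\times$ while $\varinjlim_S\cS(\A_S)=\cS(\A_\Q)$ by Proposition~\ref{structure}(2), this equals $\cS(\A_\Q)\rtimes\Q^\times$. For (3) the global sections correspond to the initial set $S=\{\infty\}$, giving $\cS(\R)\rtimes\{\pm 1\}$. The main technical obstacle I anticipate is the degree-by-degree \emph{forcing} argument in the sheaf check: rigorously showing that a matching family cannot involve group elements outside $\bigcap_i\Z_{S_i}^\times=\Z_S^\times$ requires carefully combining injectivity of the extension-by-$1_R$ map with the sheaf structure of $\mathbf{G}_m$, whereas all other steps are essentially bookkeeping once the restriction maps are seen to be algebra morphisms.
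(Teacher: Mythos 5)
Your proposal is correct and follows essentially the same route as the paper: decompose an element of the crossed product into its finitely many $u_g$-homogeneous components and reduce the sheaf axiom to the sheaf property of $\mathbf{G}_m$ (via $\bigcap_k\Z_{S_k}^\times=\Z_S^\times$) together with Proposition~\ref{structure} for the coefficients, then obtain (2) and (3) as the colimit and the $S=\{\infty\}$ value. The only difference is presentational — the paper views $\sheaf\rtimes\mathbf{G}_m$ as a sub-presheaf of the constant sheaf $\cS(\A_\Q)\rtimes\Q^\times$ and applies the subsheaf criterion, which makes your ``forcing'' step for degrees $g\notin\Z_S^\times$ automatic, whereas you argue it directly with matching families and injectivity of $\gamma$; your explicit check that $g\cdot(1_{R_{S',S}}\otimes f)=1_{R_{S',S}}\otimes(g\cdot f)$ makes precise what the paper dismisses with ``by construction''.
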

\proof By construction the algebraic cross products $\cS(\A_S)\rtimes \Z_S^\times$ define a presheaf $\sheaf\rtimes \mathbf{G}_m$ of algebras which is a sub-presheaf of the constant sheaf given by the global cross product $\cS(\A_\Q)\rtimes \Q^\times$. To prove that $\sheaf\rtimes \mathbf{G}_m$ is a sheaf we proceed as in the proof of Proposition \ref{structure}. Let $h\in \cS(\A_\Q)\rtimes \Q^\times$, then $h$ can be uniquely written as a finite sum 
$$
h=\sum f_j U_{q_j}, \ \ q_j\in \Q^\times, \ f_j \in \cS(\A_\Q), \ f_j\neq 0.
$$
If $h\in \left(\sheaf\rtimes \mathbf{G}_m\right)(S_k^c)$ for finite sets of places $S_k\ni \infty$ with $\cap S_k=S$, one has 
$q_j\in \cap \,\mathbf{G}_m(S_k^c)$ and hence $q_j\in \mathbf{G}_m(S^c)$ for all $j$. Moreover, for all $j$ one has $f_j\in \sheaf(S_k^c)$ for all $k$ and Proposition \ref{structure} shows that $f_j\in \sheaf(S^c)$. This shows that $\sheaf\rtimes \mathbf{G}_m$ is a sheaf. The proof of the other statements is straightforward. \endproof

\subsection{The stratification of the space $ Y_{\Q,S} $}\label{sectstrati}
 For $ x \in X_{\Q,S} $, represented by the adele $ a = (a_v) \in \A_S $, the set of places  
\[
Z(x) = \{v \in S \mid a_v = 0\}  
\]  
is well-defined and does not depend on the choice of the representative $ a $, as noted after \eqref{zxdef}. The same property holds for $ x \in Y_{\Q,S} $. For each $ v \in S $, the subset defined by the condition $ v \in Z(x) $ is closed in the quotient topology.  

Define $ \nu(x) := \# Z(x) $. For any integer $ n $, the condition $ \nu(x) \geq n $ defines a closed subset of both $ Y_{\Q,S} $ and $ X_{\Q,S} $ and open subsets  
\[
Y_{\Q,S}^{(n)} := \{x \in Y_{\Q,S} \mid \# Z(x) < n\}, \quad  
X_{\Q,S}^{(n)} := \{x \in X_{\Q,S} \mid \# Z(x) < n\}.  
\]  
We now proceed to describe the natural stratification of the space $ Y_{\Q,S} $.

\begin{prop}\label{ergodic23} Let  $\pi_S:Y_{\Q,S}\to X_{\Q,S}$ denote the quotient map. Then 
\begin{enumerate}
\item[(i)] The orbits of the action of the group $C_{\Q,S}$ on $Y_{\Q,S}$ are indexed by the subsets of $S$ as follows: 
$$
\Omega_Z:=\Gamma\backslash\{(a_v)\in \A_S\mid a_v= 0\qqq v \in Z, \ a_v\neq 0\qquad \forall v \notin Z\}\qqq Z\subset S.
$$
\item[(ii)] The orbit  $\Omega_{\{p\}}$ associated to the subset $\{p\}\subset S$ is  the inverse image $\pi_S^{-1}(C_p)$ of the periodic orbit  $C_p$ of length $\log p$.
\item[(iii)] The orbit $\pi_S^{-1}(C_p)$ is equivariantly isomorphic to the mapping torus of the multiplication by $p$ in the compact group $ \prod_{q\in S,q\neq p} \Z_q^*$.	
\end{enumerate}
\end{prop}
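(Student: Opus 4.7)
The plan is to treat the three parts of Proposition~\ref{ergodic23} sequentially, with (iii) being the substantive step and a direct semilocal transcription of Proposition~\ref{mappingtorus}(i). For (i), I would introduce the zero-set invariant $Z\colon Y_{\Q,S}\to 2^S$, $[a]\mapsto\{v\in S\mid a_v=0\}$, which is well-defined on classes and constant on $C_{\Q,S}$-orbits because both $\Z_S^\times$ and $\GL_1(\A_S)$ act componentwise by elements nonzero at every place. Transitivity within a level set is immediate: if $Z(a)=Z(b)=Z$, set $g_v:=b_v/a_v$ for $v\notin Z$ and $g_v:=1$ for $v\in Z$, obtaining $g\in\GL_1(\A_S)$ with $ga=b$. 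Each $\Omega_Z$ is nonempty, so the orbits are indexed by $2^S$. For (ii), I would recall that $C_p\subset X_{\Q,S}$ consists of classes represented by adeles whose zero-set is exactly $\{p\}$; since $\widehat\Z^*(S)$ acts through ideles, the zero-set passes through $\pi_S$, and therefore $\pi_S^{-1}(C_p)=\Omega_{\{p\}}$.

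For (iii), I would adapt the canonical-form argument of Proposition~\ref{mappingtorus}(i) to the semilocal setting. Start with $a\in\A_S$ satisfying $a_p=0$ and $a_v\ne 0$ for all $v\in S\setminus\{p\}$. Use $\pm 1\in\Z_S^\times$ to enforce $a_\infty>0$, then for each finite prime $q\in S\setminus\{p\}$ write $a_q=q^{n_q}u_q$ with $u_q\in\Z_q^*$ and act by $q^{-n_q}\in\Z_S^\times$. These normalizations commute across distinct $q$ because $q\in\Z_{q'}^*$ for $q'\ne q$, and the condition $a_p=0$ is preserved throughout. The residual stabilizer in $\Z_S^\times$ of the resulting normal form is precisely $p^\Z$: multiplication by $p$ fixes $a_p=0$, preserves each $\Z_q^*$ for $q\ne p$, and scales $\R_+^*$. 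This yields
\[
\Omega_{\{p\}} \;\cong\; p^\Z\backslash\left(\prod_{q\in S\setminus\{p,\infty\}}\Z_q^* \times \R_+^*\right),
\]
and via the base-$p$ logarithm on the $\R_+^*$ factor the right-hand side is exactly the mapping torus of multiplication by $p$ on the compact group $H_p:=\prod_{q\in S\setminus\{p,\infty\}}\Z_q^*$. Equivariance under $C_{\Q,S}$ follows because the idele lift $\widehat\Z^*(S)\times\R_+^*$ acts componentwise and compatibly on both sides of the identification.

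The main obstacle is part (iii), but it is essentially bookkeeping parallel to Proposition~\ref{mappingtorus}(i) after the substitutions $\Q^\times\rightsquigarrow\Z_S^\times$ and $\widehat\Z^*\rightsquigarrow\widehat\Z^*(S)$; places outside $S$ are absent from the semilocal picture, so no new phenomena arise. The only subtle point is verifying that the sequential normalizations at distinct primes $q\in S\setminus\{p,\infty\}$ are genuinely independent, which holds because each such $q$ is a unit at every other finite place in $S$.
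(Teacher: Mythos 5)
Your proposal is correct and follows essentially the same route as the paper: the zero-set invariant plus componentwise transitivity for (i)--(ii), and for (iii) your sequential normalization at the primes $q\in S\setminus\{p,\infty\}$ is exactly the explicit form of the paper's choice of the fundamental domain $\{0\}_p\times\prod_{q\in S,\,q\neq p}\Z_q^*\times\R_+^*$ for $\Gamma'=\Gamma/p^\Z$, with residual stabilizer $p^\Z$ giving the mapping torus.
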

\begin{proof} 
 (i) This follows from the transitivity of the action of $ \GL_1(\Q_v) $ on $ \Q_v \setminus \{0\} $ for any place $ v $.  

(ii) The periodic orbit $ C_p $, with length $ \log p $ under the action of the scaling group $ \R_+^* $ on $ X_{\Q,S} $, is given by $ \pi_S(\Omega_{\{p\}}) \subset X_{\Q,S} $ and is identical to the quotient $ p^\Z \backslash \R_+^* $.  

(iii) The set $ \Omega_{\{p\}} $ satisfies $ \Omega_{\{p\}} = \pi_S^{-1}(C_p) $. Furthermore, the group $ \Gamma $ decomposes as $ \Gamma = p^\Z \times \Gamma' $, where  
\[
\Gamma' = \{ \pm p_1^{n_1} \cdots p_k^{n_k} \mid p_j \in S \setminus \{p, \infty\}, \ n_j \in \Z \}.
\]  
The action of $ \Gamma' $ on $ \{0\}_p \times \left(\prod_{q \in S, q \neq p} \Q_q \setminus \{0\}\right) \times \R^* $ has a fundamental domain $ \{0\}_p \times \prod_{q \in S, q \neq p} \Z_q^* \times \R_+^* $, which remains invariant under the action of $ p^\Z $.  

The quotient  
\[
p^\Z \backslash \left(\prod_{q \in S, q \neq p} \Z_q^* \times \R_+^*\right)
\]  
is the mapping torus corresponding to multiplication by $ p $ on the compact group $ \prod_{q \in S, q \neq p} \Z_q^* $.  \end{proof}

 This description reveals that $ C_p $ lies at the boundary of the free orbit $ \pi_S(\Omega_{\emptyset}) \subset X_{\Q,S} $. The condition $ \nu(x) < 2 $ defines an open subset $ \Omega $ of $ Y_{\Q,S} $, whose structure, along with its counterpart in $ X_{\Q,S} $, is readily accessible. The latter is the union of the generic orbit with the periodic orbits $ C_v $ for $ v \in S $.

  \subsection{$K$-theory of the $ C^* $-algebra of the two first stratas}\label{KC*}
  We  provide a detailed proof of the following statement, which was stated without proof in \cite{CC8}:  \vspace{.02in}
"The simplest meaningful computation of the $ K $-theory of the $ C^* $-algebras involved pertains to the cross product $ A $ associated with the union in $ X_{\Q,S} $ of the generic orbit and the three periodic orbits $ C_p, C_q, C_\infty $. The result is $ K_0(A) \simeq \Z^3 $, reflecting the presence of the three periodic orbits, while $ K_1(A) \simeq \Z^2 $, capturing the one-dimensionality of the periodic orbits $ C_p $ and $ C_q $."\vspace{.02in} 

We consider the $ C^* $-algebra associated with the open subset formed by the union of the generic orbit and the periodic orbits. As a topological space, this subset is locally compact. We describe it explicitly in the semilocal setting of two primes.  

Let $ S = \{p, q, \infty\} $, and consider the open subspace  
\[
\Omega \subset \A_S = \Q_p \times \Q_q \times \R
\]  
defined by adeles having at most one zero component. Its complement in $ \A_S $ consists of the union of three closed subsets corresponding to adeles with at least two zero components.  

Next, we divide $ \Omega $ by the action of the compact group $ G = \Z_p^* \times \Z_q^* $. This yields a locally compact space  
\[
Z := \Omega / G,
\]  
which is the union of the following four subspaces:

\begin{enumerate}
	\item $Z_\emptyset=\left(\Q_p^*\times \Q_q^*\times \R^*\right)/G\simeq p^\Z\times q^\Z\times \{\pm 1\}\times \R_+^*$.
	\item $Z_p=\left(\{0\}\times \Q_q^*\times \R^*\right)/G\simeq \{0\}\times q^\Z\times \{\pm 1\}\times \R_+^*$.
	\item $Z_q=\left(\Q_p^*\times \{0\}\times \R^*\right)/G\simeq p^\Z\times \{0\}\times \{\pm 1\}\times \R_+^*$.
	\item $Z_\infty=\left(\Q_p^*\times \Q_q^*\times \{0\}\right)/G\simeq p^\Z\times q^\Z\times \{0\}$.
\end{enumerate} 

Next, we describe the cross product $C^*$-algebra $A=C_0(Z)\ltimes \Gamma$.
\begin{lem} The cross product $C^*$-algebras are, up to Morita equivalence, with $\mathcal K$ the compact operators,
\begin{enumerate}
	\item $A_\emptyset=C_0(Z_\emptyset)\ltimes \Gamma=\mathcal K\otimes C_0(\R_+^*)$.
	\item $A_p=C_0(Z_p)\ltimes \Gamma=\mathcal K\otimes C(C_p)$	\item $A_q=C_0(Z_q)\ltimes \Gamma=\mathcal K\otimes C(C_q)$
	\item $A_\infty =C_0(Z_\infty)\ltimes \Gamma= \mathcal K\otimes C^*(\Z/2\Z)$.
\end{enumerate} 	
\end{lem}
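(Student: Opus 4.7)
The plan is to compute each $A_\bullet = C_0(Z_\bullet) \rtimes \Gamma$ with $\Gamma = \{\pm 1\} \times p^\Z \times q^\Z$ by a three-step procedure: first, tabulate how each direct factor of $\Gamma$ acts coordinate-by-coordinate on $Z_\bullet$; second, perform an explicit ``untwisting'' change of coordinates that transforms the action into a product action in which distinct direct factors of $\Gamma$ act on disjoint coordinates; third, apply the usual cross-product dictionary on each factor and take tensor products, using $C_0(H)\rtimes H \simeq \mathcal K(\ell^2(H))$ for free transitive actions, Green's theorem $C_0(Z)\rtimes H \sim_{\mathrm{Mor}} C_0(Z/H)$ for free proper actions, and $A\rtimes K = A\otimes C^*(K)$ for trivial actions.

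The action analysis is straightforward: $-1 \in \Gamma$ is a unit of $\Z_p^*$ and $\Z_q^*$, so it acts trivially on $\Q_p^*/\Z_p^* \cong p^\Z$ and on $\Q_q^*/\Z_q^* \cong q^\Z$, and it swaps the sign factor of $\R^* = \{\pm 1\}\times \R_+^*$. The element $p$ is a unit of $\Z_q^*$, so it acts trivially on $q^\Z$, shifts on $p^\Z$, and scales $\R_+^*$ by $p$; symmetrically for $q$. On the closed stratum $Z_\infty$ the $\R$-component is absent, so $\{\pm 1\}$ acts trivially while $p^\Z\times q^\Z$ acts freely transitively on $p^\Z\times q^\Z$; hence
\[
A_\infty \;\simeq\; \bigl(C_0(p^\Z\times q^\Z)\rtimes (p^\Z\times q^\Z)\bigr)\otimes C^*(\{\pm 1\}) \;\simeq\; \mathcal K\otimes C^*(\Z/2\Z).
\]

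The remaining three strata require untwisting because $p^\Z$ (resp. $q^\Z$) acts simultaneously on $p^\Z$ (resp. $q^\Z$) and on $\R_+^*$. On $Z_\emptyset = p^\Z\times q^\Z\times\{\pm 1\}\times\R_+^*$ the homeomorphism $\varphi(m,n,s,r)=(m,n,s,p^{-m}q^{-n}r)$ intertwines the given $\Gamma$-action with the product action in which $\Gamma$ acts freely transitively on $p^\Z\times q^\Z\times\{\pm 1\}$ (a copy of $\Gamma$) and trivially on $\R_+^*$; using $C_0(\Gamma)\rtimes\Gamma\simeq \mathcal K(\ell^2(\Gamma))$ yields $A_\emptyset \simeq \mathcal K\otimes C_0(\R_+^*)$. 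The analogous map $(n,s,r)\mapsto(n,s,q^{-n}r)$ on $Z_p$ decouples the action into three independent actions: $q^\Z$ free transitive on $q^\Z$; $\{\pm 1\}$ free transitive on $\{\pm 1\}$; $p^\Z$ by scaling on $\R_+^*$. The crossed product then factors as $\mathcal K(\ell^2(q^\Z))\otimes M_2(\C)\otimes (C_0(\R_+^*)\rtimes p^\Z)$, and Green's theorem gives the Morita equivalence $C_0(\R_+^*)\rtimes p^\Z \sim_{\mathrm{Mor}} C(\R_+^*/p^\Z)=C(C_p)$, so $A_p \sim_{\mathrm{Mor}} \mathcal K\otimes C(C_p)$; the case $A_q$ is symmetric.

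The only verification needed is that the untwisting homeomorphisms do intertwine the original $\Gamma$-action with the asserted product action, which amounts to a one-line substitution in each case. There is no deep obstacle; the conceptual content is that the diagonal entanglement of $p^\Z$ and $q^\Z$ with $\R_+^*$ can be absorbed into a change of variables and thereby stripped away from the crossed product, which is exactly what makes the four pieces of the stratification visible separately at the level of $K$-theory in the subsequent computation.
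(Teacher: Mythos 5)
Your argument is correct. The paper states this lemma without proof, so there is nothing to compare against; your computation supplies exactly the standard argument one would expect. The action analysis is right ($-1$ and $q$ are units at $p$, hence act trivially on $\Q_p^*/\Z_p^*\cong p^\Z$, etc.), the untwisting homeomorphisms such as $(m,n,s,r)\mapsto(m,n,s,p^{-m}q^{-n}r)$ do intertwine the diagonal action with the decoupled product action (a one-line check, as you say), and the three standard identifications --- $C_0(H)\rtimes H\simeq\mathcal K(\ell^2(H))$ for the translation action, $C_0(\R_+^*)\rtimes p^\Z\sim_{\mathrm{Mor}}C(\R_+^*/p^\Z)=C(C_p)$ for the free proper scaling action, and tensoring by $C^*(\Z/2\Z)$ for the trivial action --- then give all four cases, consistently with the ``up to Morita equivalence'' qualifier in the statement.
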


One has an exact sequence of $C^*$-algebras of the form 
$$
0\to A_\emptyset\stackrel{\iota}{\to} A\stackrel{\rho}{\to} \left(A_p\oplus A_q\oplus A_\infty\right)\to 0 
$$
None of the involved $C^*$-algebras is unital but the $C^*$-algebra $B=A_p\oplus A_q\oplus A_\infty$ on the right in the above sequence is Morita equivalent to the unital $C^*$-algebra $\mathbf C:=C(C_p)\oplus C(C_q)\oplus C^*(\Z/2\Z)$.
We thus get the exact hexagon of $K$-theory groups
\begin{equation}\label{sixterm}
\begin{gathered}
\xymatrix@C-1em{
& K_0(A_\emptyset) \ar[rr]^{\iota_*} && K_0(A) \ar[dr]^{\rho_*} \\
K_1(B) \ar[ur]^{\delta_1} &&&& K_0(B) \ar[dl]^{\delta_0} \\
& K_1(A) \ar[ul]^{\rho_*} && K_1(A_\emptyset) \ar[ll]^{\iota_*}
}
\end{gathered}
\end{equation}
One has
\begin{enumerate}
	\item $K_0(A_\emptyset)=K_0( C_0(\R_+^*))=\{0\}$. $K_1(A_\emptyset)=K_1( C_0(\R_+^*))=\Z$.
	\item $K_0(A_p)=K_0(C(C_p))=\Z$, $K_1(A_p)=K_1(C(C_p))=\Z$.
	\item $K_0(A_q)=K_0(C(C_q))=\Z$, $K_1(A_q)=K_1(C(C_q))=\Z$.
	\item $K_0(A_\infty) =K_0(C^*(\Z/2\Z))=\Z^2$, $K_1(A_\infty) =K_1(C^*(\Z/2\Z))=\{0\}$.
\end{enumerate}

 From this, we deduce that $ K_0(B) = \Z^4 $ and $ K_1(B) = \Z^2 $. Starting with $ K_0(A_\emptyset) = \{0\} $, it follows that the map  
\[
\rho_*: K_0(A) \to K_0(B)
\]  
is injective. To determine the range of this map, which is a subgroup $ R \subset \Z^4 $ and hence a free abelian group, we need to analyze the connecting map  
\[
\delta_0: K_0(B) \to K_1(A_\emptyset) = \Z.
\]  

\begin{lem}\label{connecting} Let $ C := C_0(\R) \ltimes \Z/2\Z $ denote the cross product by the symmetry $ x \mapsto -x $, and let $ r: C \to C^*(\Z/2\Z) $ be the morphism associated with evaluation at $ 0 \in \R $. Then, with $ J := \ker(r) $, the connecting map  
\[
\delta_0: K_0(C^*(\Z/2\Z)) \to K_1(J) = \Z  
\]  
is surjective. 
\end{lem}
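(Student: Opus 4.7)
The plan is to give an explicit model of the crossed product $C = C_0(\R) \rtimes \Z/2\Z$ that renders the short exact sequence $0 \to J \to C \to C^*(\Z/2\Z) \to 0$ amenable to a direct calculation via the exponential map. By Fourier-diagonalizing the order-two generator (equivalently, by viewing $C$ as sections of the transformation groupoid $\R \rtimes \Z/2\Z$), one obtains an isomorphism
$$C \;\cong\; \{M \in C_0([0,\infty), M_2(\C)) : M(0) \text{ is diagonal}\},$$
under which $r$ corresponds to evaluation $M \mapsto M(0)$ into the diagonal subalgebra $\C \oplus \C \subset M_2(\C)$, and $J$ to the ideal of $M$ with $M(0) = 0$, which is $C_0((0,\infty)) \otimes M_2(\C)$, hence Morita equivalent to $C_0(\R)$. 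In particular $K_0(J) = 0$ and $K_1(J) \cong \Z$, the generator being realized by winding number of scalar unitaries.

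To prove surjectivity of $\delta_0 : K_0(\C \oplus \C) \to K_1(J)$ it suffices to evaluate it on the rank-one projection $e_1 = \mathrm{diag}(1,0) \in \C \oplus \C$. I would choose the self-adjoint lift $a \in C$ defined by $a(t) = \phi(t)\,\mathrm{diag}(1,0)$, where $\phi \in C_c([0,\infty))$ satisfies $\phi(0) = 1$. By the standard exponential-map formula one has $\delta_0([e_1]) = [\exp(2\pi i a)] \in K_1(J)$, and direct computation gives the unitary $u(t) = \mathrm{diag}(e^{2\pi i \phi(t)}, 1)$ in $J^+$, which equals $1$ both at $t = 0$ (where $e^{2\pi i} = 1$) and outside the support of $\phi$.

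The remaining task is to identify $[u]$ with a generator of $K_1(J) \cong \Z$. Under the Morita equivalence $J \sim C_0((0,\infty))$ the class of $u$ reduces to the class of the scalar unitary $t \mapsto e^{2\pi i \phi(t)}$ in $C_0((0,\infty))^+$. Since $\phi$ moves continuously from $1$ at $t = 0$ to $0$ for $t$ large, this loop traces the unit circle exactly once, so its class in $K_1(C_0(\R))$ is $\pm 1$; hence $\delta_0([e_1])$ generates $K_1(J)$, proving surjectivity.

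The main obstacle is the first step: establishing the explicit isomorphism $C \cong \{M : M(0) \text{ diagonal}\}$ with enough naturality that $r$ becomes evaluation at $0$ into the diagonal subalgebra rather than into some twisted copy. This amounts to carefully tracking the change of basis that diagonalizes the flip matrix coming from the $\Z/2\Z$-action at the fixed point. Once this model is in place, the remainder is a routine application of the exponential-map formula together with a Bott-type winding-number identification.
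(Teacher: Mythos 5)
Your proposal is correct and follows essentially the same route as the paper: both pass to the matrix model of $C_0(\R)\ltimes\Z/2\Z$ as $M_2$-valued functions on $[0,\infty)$ diagonal at $0$, apply the standard exponential-map formula to the projection $\mathrm{diag}(1,0)$ with a lift interpolating from $1$ at the origin to $0$ at infinity, and identify the resulting loop as a generator of $K_1(J)\cong\Z$ (you via winding number, the paper via comparison with the sequence for $C_0([0,\infty))$ — an immaterial difference). The "main obstacle" you flag, namely writing out the isomorphism so that $r$ becomes evaluation into the diagonal subalgebra, is exactly the computation the paper carries out explicitly with the matrices $\bigl(\begin{smallmatrix} f(x) & g(x)\\ g(-x) & f(-x)\end{smallmatrix}\bigr)$ and the conjugation diagonalizing $\bigl(\begin{smallmatrix} a & b\\ b & a\end{smallmatrix}\bigr)$.
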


 \begin{proof}
    An element of $ C = C_0(\mathbb{R}) \ltimes \mathbb{Z}/2\mathbb{Z} $ can be written as $ f + gU $, where $ f, g \in C_0(\mathbb{R}) $, $ U^2 = 1 $, and $ UfU^*(x) = f(-x) $ for all $ x \in \mathbb{R} $. To this element, we associate the matrix-valued function:  
\[
x \mapsto 
\begin{pmatrix}
f(x) & g(x) \\
g(-x) & f(-x)
\end{pmatrix}.
\]
The multiplication rule for these matrices is given by:  
\begin{multline*}
\begin{pmatrix}
f(x) & g(x) \\
g(-x) & f(-x)
\end{pmatrix}\begin{pmatrix}
h(x) & k(x) \\
k(-x) & h(-x)
\end{pmatrix}
= \\
=\begin{pmatrix}
f(x)h(x) + g(x)k(-x) & f(x)k(x) + g(x)h(-x) \\
f(-x)k(-x) + g(-x)h(x) & f(-x)h(-x) + g(-x)k(x)
\end{pmatrix}.
\end{multline*}

This multiplication corresponds to the relation:  
\[
(f + gU)(h + kU) = \big(fh + gk^s\big) + \big(fk + gh^s\big)U,  
\]
where $ k^s(x) = k(-x) $ and $ h^s(x) = h(-x) $. Thus, this construction provides an isomorphism of $ C $ with the sub-$ C^* $-algebra of $ C_0(\mathbb{R}_+, M_2(\mathbb{C})) $ consisting of matrices whose value at $ x = 0 $ lies in $ C^*(\mathbb{Z}/2\mathbb{Z}) \subset M_2(\mathbb{C}) $, i.e., matrices of the form:  
\[
a + bU = 
\begin{pmatrix}
a & b \\
b & a
\end{pmatrix}.
\]
The following equivalence shows the diagonalization of these matrices:  
\[
\begin{pmatrix}
1 & 1 \\
-1 & 1
\end{pmatrix}
\begin{pmatrix}
a & b \\
b & a
\end{pmatrix}
\begin{pmatrix}
\frac{1}{2} & -\frac{1}{2} \\
\frac{1}{2} & \frac{1}{2}
\end{pmatrix}
=
\begin{pmatrix}
a + b & 0 \\
0 & a - b
\end{pmatrix}.
\]
Using a unitary conjugation, we can reduce to the sub-$ C^* $-algebra of $ C_0(\mathbb{R}_+, M_2(\mathbb{C})) $ formed of matrices whose value at $ x = 0 $ is diagonal, i.e., belonging to $ D \subset M_2(\mathbb{C}) $.  

To complete the analysis, we examine the connecting map  
\[
\delta_0: K_0(D) \to K_1(C_0((0, \infty), M_2(\mathbb{C}))).
\]
As shown in \cite{Bl} (page 77), this map is defined by associating to an idempotent $ e \in D $ the loop:  
\[
(0, \infty) \ni x \mapsto \exp(2\pi i f(x)) \in M_2(\mathbb{C}),
\]
where $ f \in C_0(\mathbb{R}_+, M_2(\mathbb{C})) $ satisfies $ f(0) = e $.  

Choosing $ e $ as the element of $ D $ with diagonal values $ (1, 0) $, and extending it as a diagonal matrix with elements $ (t(x), 0) $, where $ t $ is a continuous function varying from $ t(0) = 1 $ to $ t(\infty) = 0 $, produces a generator of $ K_1(C_0((0, \infty), M_2(\mathbb{C}))) $.  

This computation mirrors the case of $ K_1(C_0([0, \infty))) = \{0\} $. Indeed,  the associated six-term exact sequence displays as:  
\[
\begin{gathered}
\xymatrix@C-1em{
& K_0(C_0((0,\infty))) = \{0\} \ar[rr] && K_0(C_0([0,\infty))) \ar[dr] \\
K_1(\mathbb{C}) = \{0\} \ar[ur]^{\delta_1} &&&& K_0(\mathbb{C}) = \mathbb{Z} \ar[dl]^{\delta_0} \\
& K_1(C_0([0,\infty))) = \{0\} \ar[ul] && K_1(C_0((0,\infty))) = \mathbb{Z} \ar[ll]
}
\end{gathered}
\]
thus it is evident that $ \delta_0 $ is surjective.
\end{proof}

 We can now complete the terms in the exact sequence \eqref{sixterm} as follows:
\[
\begin{gathered}
\xymatrix@C-1em{
& K_0(A_\emptyset) = \{0\} \ar[rr]^{\iota_*} && K_0(A) \ar[dr]^{\rho_*} \\
K_1(B) = \mathbb{Z}^2 \ar[ur]^{\delta_1} &&&& K_0(B) = \mathbb{Z}^4 \ar[dl]^{\delta_0} \\
& K_1(A) \ar[ul]^{\rho_*} && K_1(A_\emptyset) = \mathbb{Z} \ar[ll]^{\iota_*}
}
\end{gathered}
\]
Next, we will show that the map $ \delta_0 : K_0(B) \to K_1(A_\emptyset) $ is surjective. Let $ A_1 \subset A $ be the subalgebra (in fact, ideal) of $ A $, which is the cross product $ A_1 = C_0(Z_1) \ltimes \Gamma $, where $ Z_1 = Z_\emptyset \cup Z_\infty $. We have the identification:  
\[
A_1 = \mathcal{K} \otimes \left(C_0(\mathbb{R}) \ltimes \mathbb{Z}/2\mathbb{Z}\right),
\]
and the following commutative diagram:
\[
\xymatrix{
    A_\emptyset \ar[r]^\iota \ar[d]_{\rm Id}  & A_1 \ar[d]^\subset \ar[r]^{\rho_1} & A_\infty \ar[d]^\subset \\
    A_\emptyset \ar[r]_\iota & A \ar[r]_\rho & B
  }
\]
where the map $ \rho_1 : A_1 \to A_\infty $ is given by $ \rm{Id}_{\mathcal{K}} \otimes r $, with $ r $ being the restriction map $ r: C \to C^*(\mathbb{Z}/2\mathbb{Z}) $ as in Lemma \ref{connecting}. By the lemma, we obtain a projection $ e \in A_\infty $, whose image under the connecting map $ \delta_0 $ to $ K_1(A_\emptyset) = \mathbb{Z} $ is the generator of $ \mathbb{Z} $. Specifically, $ \delta_0([e]) = [\exp(2\pi i h)] \in K_1(A_\emptyset) $, where $ \rho_1(h) = e $. We can lift $ e $ to $ A $ by choosing the same $ h \in A $, so we conclude that the connecting map $ \delta_0 : K_0(B) \to K_1(A_\emptyset) $ is surjective.

 \vspace{.1in}

\section{The classifying space $\Gamma\backslash\left(\A_{S}\times\underline{E \Gamma}\right)$ and its codimension $1$ foliation} \label{sect5}

 Let $ S \ni \infty $ be a finite set of places of $ \mathbb{Q} $. The $ C^* $-algebra $ \mathcal{A} = C_0(\mathbb{A}_S) \rtimes \Gamma $, associated with the noncommutative space $ Y_{\mathbb{Q}, S} := \Gamma \backslash \mathbb{A}_S $, is not of type I when $ S $ contains at least two distinct primes (see \cite{CM}, Lemma 2.28, Chapter II). To understand the $ K $-theory of $ \mathcal{A} $, we consider the assembly map \cite{BCH}, which computes the $ K $-theory of the $ C^* $-algebra $ \mathcal{A} $ in terms of the universal proper action $ \underline{E\Gamma} $ of $ \Gamma $. Since $ \Gamma $ is commutative, all definitions of the cross product $ C^* $-algebra coincide with the reduced cross product, so we omit the subscript $ r $ in the notation $ C^*_r $. Moreover, the assembly map is an isomorphism. The group $ \Gamma $ is $ \mathbb{Z}^n \times \{\pm 1\} $, where $ n $ is the number of distinct primes in $ S $, and the universal space for proper actions, $ \underline{E \Gamma} $, is $ \mathbb{R}^n $, where $ \mathbb{Z}^n $ acts by translations and $ \{\pm 1\} $ acts trivially. This leads to the space 
\[
\mathcal{Y}_S = \Gamma \backslash \left( \mathbb{A}_S \times \underline{E \Gamma} \right),
\]
which is the quotient of $ \mathbb{A}_S \times \mathbb{R}^n $ by the diagonal action of $ \Gamma $, given by
\[
\mathcal{Y}_S = \Gamma \backslash \left( \prod_{v \in S} \mathbb{Q}_v \times \mathbb{R}^n \right).
\]
This space resolves the noncommutative nature of the quotient $ \Gamma \backslash \mathbb{A}_S $.

\begin{prop}
\label{bciso}
\begin{enumerate}
\item[(i)] The space $ \mathcal{Y}_S $ is locally compact and has dimension $ \#S $.
\item[(ii)] The action of $ \mathbb{R}^n $ by translations on the space $ \mathcal{Y}_S $ defines a codimension 1 lamination of $ \mathcal{Y}_S $, with the space of leaves being $ Y_S $.
\end{enumerate}
\end{prop}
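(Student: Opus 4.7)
The plan is to exploit properness of the diagonal $\Gamma$-action on $\mathbb{A}_S \times \mathbb{R}^n$, which reduces everything to standard facts about proper quotients. The first step is to verify this action is proper. Writing $\Gamma = \mathbb{Z}^n \times \{\pm 1\}$ and noting that the $\Gamma$-action on $\mathbb{R}^n = \underline{E\Gamma}$ factors through the projection $\Gamma \to \mathbb{Z}^n$ as a free proper translation action, one obtains properness on the product: for any sequence $\gamma_k \in \Gamma$ and any bounded sequence $(a_k, x_k) \in \mathbb{A}_S \times \mathbb{R}^n$ with $\gamma_k \cdot (a_k, x_k)$ convergent, the $\mathbb{R}^n$-coordinate forces the $\mathbb{Z}^n$-image of $\gamma_k$ into a bounded, hence finite, subset, while the $\{\pm 1\}$-part is automatically bounded. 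It follows that $\mathcal{Y}_S$ is locally compact Hausdorff.

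For the dimension assertion in (i), I would recall that each non-archimedean $\mathbb{Q}_p$ is totally disconnected, so $\mathbb{A}_S = \mathbb{R} \times \prod_{p \in S \setminus \{\infty\}} \mathbb{Q}_p$ has topological dimension $1$. Hence $\mathbb{A}_S \times \mathbb{R}^n$ has dimension $n+1 = \#S$, and this is preserved under the proper, discrete quotient.

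For (ii), I would first observe that the translation action of $\mathbb{R}^n$ on the second factor commutes with the $\Gamma$-action, because the $\Gamma$-action on $\mathbb{R}^n$ takes values in $\mathbb{Z}^n \subset \mathbb{R}^n$, which is central in the translation group. Thus $\mathbb{R}^n$ descends to an action on $\mathcal{Y}_S$. Each orbit is the image of a slice $\{a\} \times \mathbb{R}^n$, hence a quotient of $\mathbb{R}^n$ by the discrete isotropy $\Gamma_a \cap \mathbb{Z}^n$; in particular every orbit has dimension $n$, giving codimension $1$ inside the $(n+1)$-dimensional space $\mathcal{Y}_S$. Properness of the $\Gamma$-action then supplies local charts of the form $U \times V$, with $U$ a small transversal in $\mathbb{A}_S$ and $V \subset \mathbb{R}^n$ open, establishing the lamination structure with a $1$-dimensional transverse model.

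Finally, to identify the leaf space with $Y_S$, I would interchange quotients: since the $\Gamma$- and $\mathbb{R}^n$-actions commute,
\[
\mathcal{Y}_S / \mathbb{R}^n \;=\; \Gamma \backslash \bigl((\mathbb{A}_S \times \mathbb{R}^n)/\mathbb{R}^n\bigr) \;=\; \Gamma \backslash \mathbb{A}_S \;=\; Y_S,
\]
where the inner quotient collapses the $\mathbb{R}^n$-factor to a point. The main subtlety I anticipate is the variable topology of individual leaves — they can be $\mathbb{R}^n$, tori, or intermediate cylinders depending on $\Gamma_a \cap \mathbb{Z}^n$ — but this is precisely what the notion of codimension-$1$ lamination allows, requiring only uniform leaf \emph{dimension} and a transverse topological structure, not uniform leaf topology.
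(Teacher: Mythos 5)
Your proposal is correct and follows essentially the same route as the paper: properness of the diagonal $\Gamma$-action gives local compactness, the dimension count is $\dim(\A_S\times\R^n)=n+1=\#S$, and the $\R^n$-translation lamination on $\A_S\times\R^n$ descends to $\mathcal Y_S$ with leaf space $\Gamma\backslash\A_S=Y_S$. The extra details you supply (the sequence argument for properness, total disconnectedness of the non-archimedean factors, and the discrete leaf isotropy) are sound elaborations of what the paper leaves implicit.
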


\begin{proof}
(i) The properness of the action of $ \Gamma $ on $ \underline{E \Gamma} $ implies that the action of $ \Gamma $ on $ \mathbb{A}_S \times \underline{E \Gamma} $ is also proper. Thus, $ \mathcal{Y}_S $ is Hausdorff, and since $ \mathbb{A}_S \times \underline{E \Gamma} $ is locally compact, it follows that $ \mathcal{Y}_S $ is locally compact. The dimension of $ \prod_{v \in S} \mathbb{Q}_v \times \mathbb{R}^n $ is $ n + 1 = \#S $, and this dimension is preserved under the quotient.

(ii) The orbits of the $ \mathbb{R}^n $-action by translation on $ \prod_{v \in S} \mathbb{Q}_v \times \mathbb{R}^n $ define a lamination that is invariant under the action of $ \Gamma $, and hence descends to the quotient $ \mathcal{Y}_S $. The space of leaves of this lamination is $ \prod_{v \in S} \mathbb{Q}_v $, and its quotient by $ \Gamma $ is $ Y_S $.
\end{proof}

%\appendix

%%%%%%%%%%%%%%%%%%%%%
%%% End material. %%%
%%%%%%%%%%%%%%%%%%%%%

\addcontentsline{toc}{section}{References}

\bibliographystyle{amsalpha}

\vspace{20pt}
\small
\noindent

\begin{tabular}{ l l l l l l l l l l l l l}
 Alain Connes &&&&&&  Caterina Consani  &&&&&&  ~\\ 
 Coll\`ege de France &&&&&&  Department of Mathematics  &&&&&&  ~\\  3 Rue d'Ulm &&&&&&  Johns Hopkins University  &&&&&& ~\\
 75005 Paris &&&&&&  Baltimore, MD 21218 &&&&&&  ~\\
 France &&&&&&  USA &&&&&&  ~\\
 \href{mailto:alain@connes.org}{alain@connes.org} &&&&&&  \href{mailto:cconsan1@jhu.edu}{cconsan1@jhu.edu}&&&&&&  ~
\end{tabular}

\end{document}